\documentclass[11pt,reqno]{amsart}
\usepackage{amscd,amsmath,amssymb,amsthm,amsfonts}
\usepackage{mathrsfs}
\usepackage{color}
\usepackage{todonotes}

\usepackage[margin=37mm]{geometry}
\newcommand{\R}{\mathbb{R}}
\newcommand{\C}{\mathbb{C}}

\newcommand{\mat}[2][rrrrrrrrrrrrrrrrrrrrrrrrrrrrrrrrrrrrrrr]{\left[ \begin{array}{#1}#2 \\ \end{array} \right]}
\newcommand{\Mat}[2][cccccccccccccccccccccccccccccccccccccc]{\left[ \begin{array}{#1}#2 \\ \end{array} \right]}
\newcommand{\ve}{\varepsilon}

\renewcommand{\Re}{\operatorname{Re}}

\newcommand{\myspace}{\qquad\qquad\qquad}

\newcommand{\cH}{{\mathcal H}}

\newtheorem{theorem}{Theorem}[section]

\newtheorem{proposition}[theorem]{Proposition}
\newtheorem{remark}[theorem]{Remark}

\newtheorem{corollary}[theorem]{Corollary}

\numberwithin{equation}{section}

\makeatletter
\@namedef{subjclassname@2020}{\textup{2020} Mathematics Subject Classification}
\makeatother

\date{}

\begin{document}

% TITLE

\title{Well-posedness and stability for a thermoelectromagnetic system}

\author{Francesca Bucci}
\address{Francesca Bucci, Universit\`a degli Studi di Firenze,
Dipartimento di Matematica e Informatica,
Viale Morgagni 67/A, 50134 Firenze, ITALY
}
\email{francesca.bucci@unifi.it}

\author{Matthias Eller}
\address{Matthias Eller, Georgetown University, 
Department of Mathematics and Statistics,
Georgetown~360, 37th and O Streets NW, Washington DC 20057, USA
}
\email{Matthias.Eller@georgetown.edu}

\author{Nella Rotundo}
\address{Nella Rotundo, Universit\`a degli Studi di Firenze,
Dipartimento di Matematica e Informatica,
Viale Morgagni 67/A, 50134 Firenze, ITALY
}
\email{nella.rotundo@unifi.it}

% MSC2020

\subjclass[2020]{35Q61, 35M33, 35B35 (Primary); 47D06 (Secondary)}

% Keywords

\keywords{Maxwell's equations, thermoelectromagnetic system, semigroup well-posedness, exponential stability, strong stability, observability inequality}

\begin{abstract}
In this article we consider a system of coupled partial differential equations that 
mirrors the interconnection between the evolution of an electromagnetic field -- described by the Maxwell's system -- and that of the temperature distribution in a bounded region in three-dimension. A discussion of the mathematical model is provided. We establish the well-posedness of the initial-boundary value problems associated with the thermoelectromagnetic system, in an appropriate functional-analytic framework. 
Then, our investigation and main results pertain to the long-time behavior of the solutions. It is shown that either exponential stability or convergence to stationary solutions
hold true, according as the conductivity is positive definite or semidefinite, respectively;
in the latter case, strong stability is attained in specific topological or analytical settings. This complements and expands earlier results obtained for the uncoupled Maxwell's system.
\end{abstract}

\maketitle

% INTRO

\section{Introduction}
Let $\Omega \subset \R^3$ be open, non-empty, bounded, and connected, with a Lipschitz boundary $\partial \Omega$. 
Consider the system of coupled partial differential equations (PDE)
\begin{equation}\label{e:PDE-system}
 \begin{split}
  \ve e_t -\nabla\times h + a \theta_t + \sigma e &= -j 
  \\
  \mu h_t + \nabla\times e &=0 
  \\ 
  (c+\theta_0 \langle \ve^{-1} a,a\rangle) \theta_t 
  + \theta_0 \langle e_t,a\rangle 
  - \nabla\cdot(\kappa \nabla \theta) &= r
 \end{split}
 \quad \mbox{ in } Q:= \R_+ \times \Omega
\end{equation}
in the unknowns $(e,h)$ (electromagnetic field) and $\theta$ (temperature distribution).
We recall that $\ve$, $\mu$, $\sigma$ denote the electric permittivity, the magnetic permeability and the conductivity, respectively; here 
\begin{equation*}
\ve,\mu,\kappa,\sigma \in L^\infty(\Omega,\R^{3\times 3}_{\mathrm{sym}})\,, 
\; a\in L^\infty(\Omega,\R^3)\,, \; c,\theta_0\in \R\,.
\end{equation*} 
It is assumed that $\theta_0, c>0$, that $\ve,\mu,\kappa$ are uniformly positive definite, while $\sigma$ is allowed to be positive semidefinite.
Here and henceforth $\langle\cdot,\cdot \rangle$ denotes the Euclidean scalar product in
$\mathbb{C}^d$ for some $d>0$.

The PDE model under consideration -- which combines an anisotropic electromagnetic behavior with thermally induced polarization and internal heat generation -- is suitable for describing a range of anisotropic dielectric materials that also exhibit thermoelectric or {\em pyroelectric} effects. 
Several real-world materials that fit this description and for which the constant 
$c>0$ (related to heat absorption) is physically meaningful are highlighted in the Appendix. 
While a simpler version of \eqref{e:PDE-system} is found in Fabrizio and Morro \cite[\S\,7.19.1, p. 374]{FabMor}, the mathematical model is explained in fuller detail in the next section.

System \eqref{e:PDE-system} is augmented by initial conditions
\begin{equation}\label{e:IC}
 e(0,x)=e^0(x), \quad h(0,x)=h^0(x), \quad
  \theta(0,x)=\theta^0(x) \quad \mbox{ in } \Omega\;,
\end{equation}
and boundary conditions
\begin{equation}\label{e:BC}
 \nu\times e =0 \quad \mbox{ and }\quad   \theta=0 \quad \mbox{ in }
 (0,T)\times \partial\Omega\,.
\end{equation}
With $z=(e,h,\theta)^T$, $N$ the matrix 
\begin{equation} \label{e:N}
N= \Mat{ \ve & 0 & a \\ 0& \mu &  0 \\ 
\theta_0 a^\top & 0 & c+\theta_0 \langle \ve^{-1} a,a\rangle}
\end{equation}
and $A$ the unbounded linear operator on $H=L^2(\Omega,\C^7)$ defined by
\begin{equation} \label{e:A}
A = \Mat{ -\sigma & \nabla\times & 0 \\  -\nabla\times &0&0 \\
       0&0&\nabla\cdot \kappa \nabla}
\end{equation}
with domain 
\begin{equation} \label{e:domA}
D(A)= H_0(\mbox{curl},\Omega)\times H(\mbox{curl},\Omega)\times 
\{ \theta \in H^1_0(\Omega)\colon  \nabla\cdot(\kappa\nabla\theta)\in L^2(\Omega)\}\,,
\end{equation}
we rewrite the system \eqref{e:PDE-system} % above 
as 
\begin{equation} \label{e:system_0} 
Nz_t = Az+G\,, \quad \mbox{ where } G=\mat{-j\\0\\r}\,.
\end{equation}

% GOAL

\medskip
Our goal in this work is twofold:

\begin{itemize}

\item[i)]
to introduce a natural functional-analytic setup and ascertain the well-posedness of the initial-boundary value problem (IBVP) \eqref{e:PDE-system}-\eqref{e:IC}-\eqref{e:BC}, a result
which is prerequisite to the stability analysis and yet appears unavailable in the literature;

\item[ii)]
to explore the stability properties of the solutions to the coupled PDE system.

\end{itemize}

% (Sub)section: MAIN results

\subsection{Main results} \label{s:main}
Our first result establishes the well-posedness of the IBVP \eqref{e:PDE-system}-\eqref{e:IC}-\eqref{e:BC} in the function space
\begin{equation} \label{e:basic-space}
H:=L^2(\Omega,\mathbb{C}^7)\,.
\end{equation}
We note that existence and uniqueness of weak solutions was discussed in \cite[Theorems~7.33, 7.34]{FabMor} in the simpler case of constant coefficients, by using the Fourier transform in the time variable.

% THM well-posedness

\begin{theorem}[Well-posedness] \label{t:well-posed}
Let $T>0$.  
For initial data $z^0=(e^0,h^0,\theta^0)\in H$ and $G\in L^1(0,T;H)$ there exists a unique mild solution $z=(e,h,\theta)\in C([0,T],H)$ to the IBVP \eqref{e:PDE-system}-\eqref{e:IC}-\eqref{e:BC}. 
Furthermore, if $z^0\in D(A)$ (see \eqref{e:domA}), $G\in C([0,T],H)$ and either $G\in L^1((0,T),D(A))$ or $G\in W^{1,1}((0,T),H)$, then there exists a unique solution (in a strict sense) $z\in C([0,T],D(A))\cap C^1([0,T],H)$.
\end{theorem}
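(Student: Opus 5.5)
The plan is to recast \eqref{e:system_0} as an abstract Cauchy problem $z_t = N^{-1}Az + N^{-1}G$ and show that $N^{-1}A$ generates a $C_0$-semigroup of contractions on $H$ equipped with a suitable equivalent inner product. Once this is done, both assertions of Theorem~\ref{t:well-posed} follow from standard semigroup theory. Mild solutions are given by the variation of constants formula for $z^0\in H$ and $N^{-1}G\in L^1(0,T;H)$. Strict solutions exist for $z^0\in D(A)$ under either of the two additional hypotheses on $G$, by the classical results in Pazy's book. For this we need that multiplication by $N^{-1}$ preserves $C([0,T],H)$, $W^{1,1}((0,T),H)$ and $L^1((0,T),D(A))$. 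The first two are clear since $N^{-1}\in L^\infty$. For the third we use a coordinate structure: $N^{-1}$ acts on the $(e,\theta)$ components by a matrix mixing them, so it is better to put the mixing into the inner product than into the operator (see below).

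The key observation is that $N$ is not symmetric, but $N$ becomes symmetric after scaling the third row by $\theta_0^{-1}$. Set $D=\mathrm{diag}(I_3,I_3,\theta_0^{-1})$ and $M:=DN$. Then
\[
M=\Mat{\ve & 0 & a\\ 0&\mu&0\\ a^\top & 0 & \theta_0^{-1}c+\langle \ve^{-1}a,a\rangle},
\]
which is symmetric. By a Schur complement argument with respect to the $\ve$-block, $M$ is uniformly positive definite: the complement is $\theta_0^{-1}c+\langle\ve^{-1}a,a\rangle-\langle \ve^{-1}a,a\rangle=\theta_0^{-1}c>0$. Hence $(z,w)_M:=\int_\Omega\langle Mz,w\rangle\,dx$ is an inner product on $H$ equivalent to the standard one. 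The system becomes $Mz_t = DAz + DG$, i.e.\ $z_t=\mathcal A z+M^{-1}DG$ with $\mathcal A:=M^{-1}DA$ on $D(\mathcal A)=D(A)$. The assumptions on $G$ transfer to $M^{-1}DG$, except in the $L^1(D(A))$ case. There I would instead note that the solution formula needs only $\mathcal A$ applied to the forcing, and $\mathcal A M^{-1}DG$ involves $DA$ applied to $M^{-1}DG$. To handle this cleanly, I would set up the problem as $Mz_t=DAz+DG$ and verify that the relevant regularity result, stated via $(\lambda-\mathcal A)^{-1}$ estimates, goes through when $DG\in L^1(D(A))$. This is a technical point I would check carefully, possibly via the identity $\mathcal A(M^{-1}DG)$ versus $M^{-1}D(AG)$.

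Next I prove dissipativity of $\mathcal A$ in $(\cdot,\cdot)_M$. For $z\in D(A)$ we have $\Re(\mathcal Az,z)_M=\Re\int\langle DAz,z\rangle$. The curl terms cancel after integration by parts, using $\nu\times e=0$ and the standard Green formula in $H_0(\mathrm{curl})\times H(\mathrm{curl})$. The $\theta$ term gives $-\theta_0^{-1}\int\langle\kappa\nabla\theta,\nabla\theta\rangle$, using $\theta\in H^1_0$. Altogether
\[
\Re(\mathcal Az,z)_M=-\int_\Omega\langle\sigma e,e\rangle-\theta_0^{-1}\int_\Omega\langle\kappa\nabla\theta,\nabla\theta\rangle\le 0 .
\]
Density of $D(A)$ is clear, since it contains $C_c^\infty$.

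The main step is maximality: for some $\lambda>0$, solve $(\lambda M - DA)z=Mf$ for arbitrary $f\in H$. I would decouple by eliminating $h$. The $h$-equation $\lambda\mu h+\nabla\times e=\mu f_2$ gives $h=\lambda^{-1}(f_2-\mu^{-1}\nabla\times e)$. Substituting, we obtain a variational problem for $(e,\theta)\in H_0(\mathrm{curl})\times H^1_0$ with sesquilinear form
\[
\begin{split}
b((e,\theta),(\phi,\psi))={}&\lambda\int\langle M_{e\theta}(e,\theta),(\phi,\psi)\rangle+\int\langle\sigma e,\phi\rangle\\
&+\lambda^{-1}\int\langle\mu^{-1}\nabla\times e,\nabla\times\phi\rangle+\theta_0^{-1}\int\langle\kappa\nabla\theta,\nabla\psi\rangle,
\end{split}
\]
where $M_{e\theta}$ is the $(e,\theta)$ block of $M$, which is positive definite. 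This form is bounded and coercive on $H_0(\mathrm{curl})\times H^1_0$, so Lax--Milgram yields $(e,\theta)$. Testing with $C_c^\infty$ functions recovers the equations in the distributional sense, and shows $h\in H(\mathrm{curl})$ and $\nabla\cdot\kappa\nabla\theta\in L^2$, so $z\in D(A)$. Lumer--Phillips then gives the contraction semigroup, and the theorem follows. I expect the only real obstacle is the bookkeeping showing that the weak solution lies in $D(A)$, together with the $L^1(D(A))$ forcing case noted above; the rest is routine.
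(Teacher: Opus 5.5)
Your route is the paper's route, written in the original variables. With $U=\mathrm{diag}(I_6,\theta_0^{1/2})$ one has $DN=U^{-1}(U^{-1}NU)U^{-1}$. So your weighted inner product on $z$ is exactly the paper's inner product \eqref{e:inner} on $y=U^{-1}z$, and your generator $(DN)^{-1}DA=N^{-1}A$ is similar to the paper's $M^{-1}A$. Your dissipativity computation and your maximality step (eliminate $h$, apply Lax--Milgram on $H_0(\mathrm{curl},\Omega)\times H^1_0(\Omega)$, then recover $h\in H(\mathrm{curl},\Omega)$ and $\nabla\cdot(\kappa\nabla\theta)\in L^2(\Omega)$ by testing with $C_0^\infty$ functions) are those of Proposition~\ref{p:Lumer-P}. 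Your Schur-complement argument is the correct justification of uniform positivity. The paper's remark that the eigenvalues of the weight are those of $\ve$, $\mu$, $c$ is not accurate (only the determinant factorizes), though positivity holds either way. The mild-solution statement and the $W^{1,1}$ case of the strict-solution statement are therefore fully covered by your plan.

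The point you leave open is genuine, and the fix you sketch cannot close it. Since $(DN)^{-1}D=N^{-1}$, the forcing of your Cauchy problem is $N^{-1}G$ however you distribute the weight, so moving the mixing into the inner product buys nothing. The abstract strict-solution result you (and the paper) invoke needs the forcing to lie in $L^1(0,T;D(A))$, i.e.\ $N^{-1}G\in L^1(0,T;D(A))$. This does not follow from $G=(-j,0,r)\in L^1(0,T;D(A))$:
\begin{itemize}
\item The third component of $N^{-1}G$ is $(\theta_0\,a^\top\ve^{-1}j+r)/c$. Knowing $j(t)\in H_0(\mathrm{curl},\Omega)$ only gives $a^\top\ve^{-1}j(t)\in L^2(\Omega)$, not $H^1_0(\Omega)$. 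This fails even for constant coefficients, because $H_0(\mathrm{curl},\Omega)\not\subset H^1(\Omega)$ (think of gradients of $H^1_0$ functions).
\item With merely $L^\infty$ coefficients, the first component $-(\ve^{-1}+\theta_0\ve^{-1}aa^\top\ve^{-1}/c)\,j-\ve^{-1}a\,r/c$ generally leaves $H_0(\mathrm{curl},\Omega)$ as well.
\end{itemize}
Hence $\mathcal A(N^{-1}G)$ is not even defined, and there is no identity with $N^{-1}(AG)$ to exploit. The paper's proof has the same lacuna: it applies the standard theorem with $G$ in place of the actual forcing $M^{-1}U^{-1}G=U^{-1}N^{-1}G$. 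So neither argument establishes the $L^1(0,T;D(A))$ clause as literally stated. What both arguments do prove is the strict-solution result under the hypothesis $N^{-1}G\in L^1(0,T;D(A))$, a condition on the forcing of the normalized problem $z_t=N^{-1}Az+N^{-1}G$. That is the form in which this clause should be stated and proved.
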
 

\begin{remark}
\begin{rm}
In this article the regularity property $y\in C([0,T],D(A))\cap C^1([0,T],H)$ for the 
solution $y$ to the nonhomogeneous problem $y'=Ay+f$ is subsumed under the adjective {\em strict}, as is more usual within semigroup theory.
We recall that in the said context the concept of {\em classical} solution is weaker, as differentiability in $t_0=0$ is not required; see \cite[Def.~2.1, p.~105]{Pazy} or \cite[Def.~3.1, p.~129]{BDDM}.
We note that the very same membership $z\in C([0,T],D(A))\cap C^1([0,T],H)$ is associated with solutions termed ``classical'' in \cite{ReRo2004}; see the definition on p.~401 and Theorem~12.16 therein.
\end{rm}
\end{remark}

In the case that the conductivity $\sigma$ is uniformly positive definite on $\Omega$, the property of exponential stability can be inferred by using the methods pursued in \cite[Section 2]{Ell19}.
In order to state our result we need to introduce the function space 
\begin{equation*}
H_0(\mathrm{div}_\mu\,0,\Omega) = 
\big\{h\in L^2(\Omega,\C^3)\colon \mathrm{div}(\mu h)=0 \mbox{ in }\Omega
 \mbox{ and } \langle \nu, \mu h \rangle =0 \mbox{ on } \partial \Omega\big\}\,,
\end{equation*}
which is a closed subspace of $H$.
The statement is as follows.

% THM exponential stability

\begin{theorem}[Exponential stability] \label{t:exp-stability}
Suppose that $\Omega$ is simply connected and that $\sigma\in L^\infty(\Omega,\R^{3\times 3}_\mathrm{sym})$ is uniformly positive definite on $\Omega$.
Then there exists a positive constant $\alpha$ such that 
\begin{equation*}
\|(e,h,\theta)\| \lesssim e^{-\alpha t}\|(e^0,h^0,\theta^0)\|
\end{equation*}
for any mild solution to the IBVP \eqref{e:PDE-system}-\eqref{e:IC}-\eqref{e:BC} with 
$h^0 \in H_0(\mathrm{div}_\mu\,0,\Omega)$, $j=0$ and $r=0$. 
Here $\|\cdot\|$ denotes the usual norm in $L^2(\Omega,\C^7)$.
\end{theorem}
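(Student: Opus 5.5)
We work with the natural energy of the system and build a perturbed Lyapunov functional, in the spirit of \cite[Section 2]{Ell19}. Everything is first done for strict solutions, given by Theorem~\ref{t:well-posed} for $z^0\in D(A)$ and $G=0$, and then extended to mild solutions by density and continuity.

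\emph{Step 1: the energy identity.} Multiply the first equation by $\bar e$, the second by $\bar h$, and the third by $\bar\theta/\theta_0$, integrate over $\Omega$, and take real parts. The coupling terms $\langle a\theta_t,e\rangle+\langle e_t,a\rangle\theta$ combine into $\frac{d}{dt}\Re(\theta\langle a,e\rangle)$. This gives the energy
\begin{equation*}
E(t)=\tfrac12\int_\Omega \Big(|\ve^{1/2}e+\ve^{-1/2}a\theta|^2+\langle \mu h,h\rangle+\tfrac{c}{\theta_0}|\theta|^2\Big)\,dx .
\end{equation*}
Because $c>0$ and $\ve,\mu$ are uniformly positive definite, $E\simeq\|(e,h,\theta)\|^2$. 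The boundary conditions \eqref{e:BC} kill the boundary terms, and we obtain
\begin{equation*}
E'(t)=-\int_\Omega\langle\sigma e,e\rangle-\tfrac1{\theta_0}\int_\Omega\langle\kappa\nabla\theta,\nabla\theta\rangle .
\end{equation*}
With $\sigma$ uniformly positive definite and the Poincaré inequality in $H^1_0$, this controls $\|e\|^2+\|\theta\|^2$ but gives nothing for $h$.

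\emph{Step 2: invariance of the magnetic constraint.} From $\mu h_t=-\nabla\times e$ we get $\mathrm{div}(\mu h_t)=0$. Since $\nu\times e=0$, also $\langle\nu,\nabla\times e\rangle=0$ on $\partial\Omega$. Hence $h(t)\in H_0(\mathrm{div}_\mu 0,\Omega)$ for all $t$.

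\emph{Step 3: the vector potential (main obstacle).} I need, for $h\in H_0(\mathrm{div}_\mu 0,\Omega)$, a field $\psi=\Psi h\in H_0(\mathrm{curl},\Omega)$ with
\begin{equation*}
\nabla\times\psi=\mu h,\qquad \psi\perp_{L^2}\nabla H^1_0(\Omega),\qquad \|\psi\|\lesssim\|h\| .
\end{equation*}
The plan is as follows.
\begin{itemize}
\item Use the Helmholtz decomposition $L^2=\nabla H^1_0\oplus H(\mathrm{div}\,0)$.
\item Use compactness of the embedding $H_0(\mathrm{curl})\cap H(\mathrm{div})\hookrightarrow L^2$ (Weber/Picard) on Lipschitz domains.
\item Use the absence of the relevant harmonic (Neumann-type) fields. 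This is where simple connectedness of $\Omega$ enters, since it makes $\mu h$ orthogonal to the finite-dimensional cohomology space.
\end{itemize}
If the range characterization also requires $\partial\Omega$ to be connected, I would handle the Dirichlet fields by an additional orthogonality argument on the data, or observe that only the potential for $\mu h$ (a normal-trace-free field) is needed. Since $\Psi$ is linear and time independent, differentiating gives $\nabla\times\psi_t=\mu h_t=-\nabla\times e$. Hence $\psi_t=-Pe$, where $P$ is the $L^2$-projection onto $(\nabla H^1_0)^\perp$, and $\|\psi_t\|\le\|e\|$.

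\emph{Step 4: the perturbed functional.} Set $\Phi(t)=\Re\int_\Omega\langle \ve e+a\theta,\psi\rangle$. Using the first equation, $(\ve e+a\theta)_t=\nabla\times h-\sigma e$. Integrating by parts (allowed since $\psi\in H_0(\mathrm{curl})$) gives $\int\langle\nabla\times h,\psi\rangle=\int\langle h,\mu h\rangle$. Therefore
\begin{equation*}
\Phi'\ \ge\ \int_\Omega\langle\mu h,h\rangle - C\big(\|e\|\,\|h\|+\|e\|^2+\|\theta\|\,\|e\|\big),
\end{equation*}
and Young's inequality gives $\Phi'\ge \tfrac12\int\langle\mu h,h\rangle-C'(\|e\|^2+\|\nabla\theta\|^2)$.

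Define $L=E-\delta\Phi$. Since $|\Phi|\lesssim E$, we have $L\simeq E$ for $\delta$ small. Moreover
\begin{equation*}
L'\le -(c_0-\delta C')\big(\|e\|^2+\|\nabla\theta\|^2\big)-\tfrac{\delta}{2}\int_\Omega\langle\mu h,h\rangle\le -\gamma E ,
\end{equation*}
where the last inequality uses Poincaré again. Gronwall then yields $E(t)\le Ce^{-2\alpha t}E(0)$, which is the claim. By density of $D(A)\cap\{h\in H_0(\mathrm{div}_\mu 0)\}$ in the subspace $L^2\times H_0(\mathrm{div}_\mu0,\Omega)\times L^2$, the estimate extends to mild solutions.
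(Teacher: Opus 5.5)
Your argument is correct and is essentially the paper's proof: your $\psi=\Psi h$ is the vector potential $a$ of Proposition~\ref{p:helmholtz} (with $\mu y_2=\nabla\times a$ and $y_1=-\partial_t a+\nabla p+h_2$), and your computation of $\Phi'$ is the pointwise-in-time version of the paper's weak-form computation with test function $\varphi^2 a$; the only difference is that you differentiate $E-\delta\Phi$ and apply Gronwall, whereas the paper integrates in time to obtain the observability inequality \eqref{e:obs-ineq} and then combines it with the energy identity. One small correction: a simply connected $\Omega$ may still have a disconnected boundary (e.g.\ a spherical shell), and then $e+\psi_t$ lies in $\nabla H^1_0(\Omega)\oplus\mathbb{H}_2$ rather than in $\nabla H^1_0(\Omega)$ (this is the $h_2$ term in the paper's decomposition), so $\psi_t=-Pe$ is not literally true; if you choose $\Psi$ with range also orthogonal to $\mathbb{H}_2$, then $-\psi_t$ is the orthogonal projection of $e$ onto $(\nabla H^1_0(\Omega)\oplus\mathbb{H}_2)^\perp$, so the bound $\|\psi_t\|\le\|e\|$, which is all you actually use, still holds.
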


We note here that by applying the divergence to the second equation in \eqref{e:PDE-system} and integrating in time, we see that the initial data with $\mathrm{div}(\mu h^0)=0$ produce a magnetic field with $\mathrm{div}(\mu h(t))=0$ for all $t\in [0,\infty)$. This is Gauss's law for the magnetic induction. It is equivalent (in classical physics) to the fact that there are no magnetic charges. 
Likewise, the boundary condition $\langle \nu, \mu h(t) \rangle =0$ will hold as soon as it is enforced at $t=0$. 
\begin{remark}
We like to point out that Gauss's law for the magnetic induction and the boundary condition $\langle \nu, \mu h\rangle=0$ are both necessary for stability of the fields. If $g\in H^1(\Omega)$ has compact support in $\Omega$, then the vector $(0,\nabla g,0) \in H$ solves \eqref{e:PDE-system} for all $t\in [0,\infty)$. 
Likewise $(0,\nabla g,0)\in H$ is a solution to \eqref{e:PDE-system}, provided $g \in H^1(\Omega)$ is a solution to $\mathrm{div}(\mu \nabla g)=0$. This solution satisfies Gauss's law, but it maybe non-trivial without some homogeneous boundary condition.

Finally, if $\Omega$ is not simply connected, then the space $H(\mathrm{curl}\,0,\Omega)\cap H_0(\mathrm{div}_\mu\,0,\Omega)$ is non-trivial and will preclude stability of the semigroup $S(t)$. 
\end{remark}
  
The more interesting question is what happens when $\sigma$ is only semipositive definite 
on $\Omega$. In order to discuss the stability properties of the solutions to the boundary value problem (BVP) \eqref{e:PDE-system}-\eqref{e:BC} in this scenario, we introduce a decomposition of the set $\Omega$ in connection with the definiteness properties of the conductivity parameter $\sigma$. 
Recall that the conductivity is assumed to be semi-positive definite; accordingly we decompose $\Omega$ into disjoint and measurable subsets:
\begin{equation*}
\Omega = \Omega_0 \cup \Omega_{\ge} \cup \Omega_{>}\,,
\end{equation*}
with
\begin{equation*}
\begin{split}
&\Omega_0 := \{x\in \Omega\colon \sigma(x)=0 \;\text{a.e.}\}
\\
&\Omega_>:=\{x\in \Omega\colon \sigma(x)>0 \;\text{a.e.}\}
\\
&\Omega_{\ge}:= \Omega\setminus (\Omega_0\cup \Omega_>)\,.
\end{split}
\end{equation*}

In addition, we will restrict the state space to  
\begin{equation} \label{e:spaceY}
Y= \big\{(e,h,\theta)\in H\colon h \in H_0(\mathrm{div}_\mu \,0,\Omega), \; 
 \mbox{div}(\ve e + a \theta) = 0 \; \text{in $\Omega_0$}\big\}\,,
\end{equation}
which is a closed subspace of $H$. 
From Theorem \ref{t:exp-stability} we are already familiar with the condition on $h$. Here we add also Gauss's law for the electric displacement $\ve e+a\theta$, but only on the set where $\sigma = 0$. In other words, we assume that there are no electric charges present in 
$\Omega_0$. 
Applying the (distributional) divergence to the homogeneous system shows that these two properties are preserved over time, namely, a mild or strict solution with initial data in $Y$ will be reside in $Y$ for all times provided $G\equiv 0$. 

In this case we are led to increase the regularity assumed on the electric permittivity and the magnetic permeability -- from $\ve,\mu \in L^\infty(\Omega,\R^{3\times 3})$ to $\ve,\mu \in W^{1,\infty}(\Omega,\R^{3\times 3})$.
\\
In the sequel, the interior of a subset $C\subset \R^n$ is denoted by $\mathring{C}$.

% THM convergence to equilibria

\begin{theorem}[Convergence to equilibria] \label{t:convergence}
Suppose that $\ve,\mu \in W^{1,\infty}(\Omega,\R^{3\times 3})$ and that $\mathring{\Omega_>}\ne \emptyset$. 
If $z(\cdot)=(e,h,\theta)(\cdot)$ is a mild solution to the IBVP \eqref{e:PDE-system}-\eqref{e:IC}-\eqref{e:BC} with $z^0=(e^0,h^0,\theta^0)\in Y$, $j=0$, and $r=0$, then 
\begin{equation} \label{e:convergence}
z(t) \longrightarrow \tilde{z}\in \ker A\cap Y \quad \textrm{in $H$, as $t\to +\infty$.}
\end{equation}

\end{theorem}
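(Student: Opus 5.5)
We plan to prove the result with the usual strategy for contraction semigroups on Hilbert spaces: precompactness of orbits combined with a LaSalle-type argument, or equivalently the splitting $Y=(\ker A\cap Y)\oplus\overline{\mathrm{ran}}$ supplied by the mean ergodic theorem. We work in the energy space, where $H$ carries the weighted inner product that makes $N^{-1}A$ dissipative. The weights are $\varepsilon$ on $e$, $\mu$ on $h$, and $c/\theta_0$ on $\theta$, after eliminating $\theta_t$ through the change of variables hidden in $N$. This inner product was presumably already used for Theorem \ref{t:well-posed}. In it the energy identity reads
\[
\frac{d}{dt}E(t)=-\int_\Omega\langle\sigma e,e\rangle-\frac{1}{\theta_0}\int_\Omega\langle\kappa\nabla\theta,\nabla\theta\rangle .
\]
Hence $S(t)$ is a contraction semigroup on $Y$, and it leaves $Y$ invariant, as noted before the statement.

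\emph{Step 1 (kernel and spectral decomposition).} First we identify $\ker A\cap Y$. If $Az=0$, then $\nabla\cdot\kappa\nabla\theta=0$ with $\theta\in H^1_0$, so $\theta=0$. Testing against $z$ gives $\sigma e=0$, and therefore $\nabla\times h=0$ and $\nabla\times e=0$. Since $h\in H_0(\mathrm{div}_\mu 0)$, we get $h$ in the harmonic field space; this space vanishes if $\Omega$ is simply connected, and otherwise is finite dimensional. The field $e$ is a curl-free field with $\nu\times e=0$, supported where $\sigma e=0$, and divergence-free in $\Omega_0$. Because $S(t)$ is a contraction on a Hilbert space, it is mean ergodic. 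So $Y=\ker(N^{-1}A)\oplus\overline{\mathrm{ran}(N^{-1}A)}$, the two summands being orthogonal in the energy inner product, since the generator is dissipative and its kernel equals the kernel of its adjoint. It then suffices to show that $S(t)w\to0$ for every $w$ in the second summand. Equivalently, by the Arendt--Batty--Lyubich--V\~u theorem, we must show that $\sigma_p(N^{-1}A)\cap i\R\subset\{0\}$ on $Y$, that $0$ is not in the point spectrum on the complement, and that $\sigma(N^{-1}A)\cap i\R$ is countable.

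\emph{Step 2 (no nonzero imaginary eigenvalues).} Suppose $N^{-1}Az=i\lambda z$ with $\lambda\ne0$. Dissipation gives $\sigma e=0$ and $\nabla\theta=0$, so $\theta=0$. The system then reduces to the time-harmonic Maxwell system $i\lambda\varepsilon e=\nabla\times h$, $i\lambda\mu h=-\nabla\times e$, together with $\sigma e=0$. On $\mathring{\Omega_>}$ we have $e=0$, and then $h=0$ there as well. Unique continuation for time-harmonic Maxwell systems with $W^{1,\infty}$ coefficients (this is where the Lipschitz regularity of $\varepsilon,\mu$ is used) gives $e=h=0$ on the connected set $\Omega$.

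\emph{Step 3 (the spectrum on $i\R$ is countable, and the point $0$).} This is the main obstacle, since the resolvent of $A$ is not compact: Maxwell operators have infinite-dimensional kernels. We propose to prove that for $\lambda\ne0$ the operator $i\lambda-N^{-1}A$ restricted to $Y$ is Fredholm of index zero. The tool is compactness of the embedding
\[
H_0(\mathrm{curl})\cap\{\mathrm{div}(\varepsilon e+a\theta)\in L^2\}\hookrightarrow L^2
\]
(Weber/Picard) on $\Omega_0$. On $\Omega_>\cup\Omega_\ge$ the divergence is controlled by the resolvent equation itself, since $\mathrm{div}$ of the first equation gives $i\lambda\,\mathrm{div}(\varepsilon e+a\theta)=-\mathrm{div}(\sigma e)+\dots$. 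Step 2 then shows the spectrum on $i\R\setminus\{0\}$ is empty, and only $\lambda=0$ remains. There we use the mean ergodic splitting of Step 1 directly. If $0$ lies in the spectrum of the restriction to $\overline{\mathrm{ran}}$, the countable-spectrum theorem still applies, because $\{0\}$ is countable and the restricted generator has no kernel. This yields strong convergence to $0$ on $\overline{\mathrm{ran}}$, hence $z(t)\to\tilde z=Pz^0\in\ker A\cap Y$.
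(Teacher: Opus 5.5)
Your overall strategy is sound, and it differs from the paper's. The splitting $Y=\ker B\oplus\overline{R(B)}$ is also the paper's final step, and your Step~2 coincides with the paper's injectivity result for $\mathrm{i}\lambda-B$, $\lambda\neq0$. The paper then runs a LaSalle argument: precompactness of $\{y_t(t)\}$ for data in $D(B^2)$, followed by the eigenfunction expansion of the undamped Maxwell operator and unique continuation. You instead go through the Arendt--Batty--Lyubich--V\~u (ABLV) theorem.

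\textbf{The gap is in Step~3.} That step carries all the weight, and the argument you sketch for it does not work.
\begin{itemize}
\item The sets $\Omega_0,\Omega_{\ge},\Omega_>$ are only measurable. There is no Weber/Picard compactness ``on $\Omega_0$'' to invoke.
\item Globally, the divergence is not in $L^2(\Omega)$. For $(\mathrm{i}\lambda-B)y=f\in Y$ you get
\[
\mathrm{i}\lambda\,\mathrm{div}\big(\ve y_1+\sqrt{\theta_0}\,a y_3\big)=\mathrm{div}\,(Mf)_1-\mathrm{div}(\sigma y_1).
\]
Both terms on the right lie only in $H^{-1}(\Omega)$. The second depends on the very component $y_1$ whose compactness you are trying to prove, so it cannot be treated as data.
\item Nothing in your sketch yields compactness of $y_1$, and ``index zero'' is asserted without argument.
\end{itemize}

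\textbf{The missing idea} is to feed the dissipation into the compactness argument along approximate eigenvectors. This also makes Fredholm theory unnecessary.
\begin{itemize}
\item The open right half-plane lies in $\rho(B)$, so any $\mathrm{i}\lambda\in\sigma(B)$ is a boundary point of the spectrum. Hence there are $y^n\in D(B)$ with $\|y^n\|_H=1$ and $(\mathrm{i}\lambda-B)y^n\to0$.
\item Taking real parts of $\langle(\mathrm{i}\lambda-B)y^n,y^n\rangle_H$ gives $\sigma^{1/2}y^n_1\to0$ in $L^2$ and $y^n_3\to0$ in $H^1_0$.
\item The equations then bound $\nabla\times y^n_1$ and $\nabla\times y^n_2$. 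So $y^n_2$ is precompact, by compactness of $H(\mathrm{curl},\Omega)\cap H_0(\mathrm{div}_\mu\,0,\Omega)$.
\item Since $\lambda\neq0$, the identity above gives $\mathrm{div}(\ve y^n_1)\to0$ in $H^{-1}(\Omega)$.
\item Use the global decomposition $y^n_1=\nabla g^n+\phi^n$, with $g^n\in H^1_0(\Omega)$ and $\mathrm{div}(\ve\phi^n)=0$. Then $\nabla g^n\to0$, and $\phi^n$ is bounded in $H_0(\mathrm{curl},\Omega)\cap H(\mathrm{div}_\ve\,0,\Omega)$, hence precompact in $L^2$.
\item A limit point is a unit eigenvector for $\mathrm{i}\lambda$, contradicting Step~2. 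Hence $\sigma(B)\cap\mathrm{i}\R\subset\{0\}$.
\end{itemize}
Note that the divergence constraint on $\Omega_0$ built into $Y$ plays no role here. It enters only through the splitting at $0$.

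ABLV needs $\sigma_p(B^*)\cap\mathrm{i}\R=\emptyset$, not $\sigma_p(B)$. This follows because, for a contraction semigroup on a Hilbert space, eigenvectors of $B^*$ on $\mathrm{i}\R$ are eigenvectors of $B$. The same fact gives $\ker B^*=\ker B\perp\overline{R(B)}$, which disposes of the point $0$ on $\overline{R(B)}$.

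This is the same compactness mechanism as the paper's Proposition~\ref{p:pre-strong}, applied to $y^n$ instead of $y_t(t_n)$. With this repair, your frequency-domain route replaces the LaSalle argument and the eigenfunction expansion, at the price of controlling the whole imaginary axis.

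A minor point: the energy inner product is $\int_\Omega\langle My,y\rangle\,dx$, that is, $\langle\ve^{-1}D,D\rangle+\langle\mu h,h\rangle+\frac{c}{\theta_0}|\theta|^2$ with $D=\ve e+a\theta$. It is not diagonal in $(e,h,\theta)$.
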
 

\smallskip
Further work that expands on the latter result -- including, in particular, a  characterization of $\ker A\cap Y$ -- is found in Section~\ref{s:stability}; see
Proposition~\ref{p:last}. 
This part of our analysis brings about the following results regarding strong stability.

% Strong stability, I

\begin{theorem}[Strong stability, I] \label{t:strong-stability}
Assume -- besides the standing assumptions of Theorem~\ref{t:convergence} -- that $\Omega$ is simply connected,
$\mathring{\Omega_{\ge}}=\emptyset$ and $\mathring{\Omega_>}$ has non-empty interior with a Lipschitz boundary.
If $\partial\Omega_0$ has a sole component, then all mild solutions to problem \eqref{e:PDE-system}-\eqref{e:IC}-\eqref{e:BC} with $z^0\in Y$ tend to $0$, as $t\to+\infty$.
In other words, the system \eqref{e:system_1} is strongly stable.
\end{theorem}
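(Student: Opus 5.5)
By Theorem~\ref{t:convergence}, every mild solution with $z^0\in Y$ converges in $H$ to some $\tilde z\in\ker A\cap Y$. Strong stability will therefore follow once we show $\ker A\cap Y=\{0\}$ under the present topological assumptions.

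Take $\tilde z=(e,h,\theta)\in\ker A\cap Y$, so $Az=0$ with $z\in D(A)$.

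\emph{Temperature.} The third row gives $\nabla\cdot(\kappa\nabla\theta)=0$ with $\theta\in H^1_0(\Omega)$. Since $\kappa$ is uniformly positive definite, this forces $\theta=0$.

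\emph{Electric field on $\Omega_>$.} Take the inner product of the first two rows with $(e,h)$ and add. The curl terms cancel, because $\nu\times e=0$ and integration by parts gives $\int\langle\nabla\times h,e\rangle=\int\langle h,\nabla\times e\rangle$. What remains is
\[
\int_\Omega\langle\sigma e,e\rangle=0 .
\]
Hence $\sigma e=0$ a.e., which gives $e=0$ on $\Omega_>$. Since $\mathring{\Omega_{\ge}}=\emptyset$, we expect $\sigma e=0$ to carry no further information there; we treat $\Omega_\ge$ as negligible and effectively work with $\Omega=\overline{\Omega_0}\cup\overline{\Omega_>}$ up to null sets. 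This is a point to check carefully.

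\emph{Magnetic field.} The second row gives $\nabla\times e=0$ in $\Omega$. The first row then gives $\nabla\times h=\sigma e=0$. Together with $h\in H_0(\mathrm{div}_\mu 0,\Omega)$, $h$ lies in $H(\mathrm{curl}\,0,\Omega)\cap H_0(\mathrm{div}_\mu\,0,\Omega)$. Since $\Omega$ is simply connected, $h=\nabla\phi$ with $\mathrm{div}(\mu\nabla\phi)=0$ and $\langle\nu,\mu\nabla\phi\rangle=0$ on $\partial\Omega$. Thus $\phi$ is constant and $h=0$; this is the same argument used for Theorem~\ref{t:exp-stability}.

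\emph{Electric field on $\Omega_0$ (main obstacle).} Here $e\in H_0(\mathrm{curl},\Omega)$ with $\nabla\times e=0$, and $\Omega$ is simply connected, which has connected boundary in the Lipschitz setting. So $e=\nabla p$ with $p\in H^1_0(\Omega)$, where $\nu\times e=0$ gives constant tangential trace, normalised to zero.

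\begin{itemize}
\item Since $e=0$ on the open set $\mathring{\Omega_>}$, $p$ is constant on each component of $\mathring{\Omega_>}$.
\item Since $\theta=0$ and $\tilde z\in Y$, we have $\mathrm{div}(\ve\nabla p)=0$ in $\Omega_0$.
\end{itemize}

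Then $p$ solves an elliptic problem on $\Omega_0$ (interior taken as the complement of $\overline{\Omega_>}$, using $\mathring{\Omega_{\ge}}=\emptyset$), with Dirichlet data on $\partial\Omega_0$ equal to $0$ on $\partial\Omega$ and to constants on the interfaces with $\Omega_>$.

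The hypothesis that $\partial\Omega_0$ has a single component is what should force all these constants to coincide. The Lipschitz boundary of $\mathring{\Omega_>}$ gives the trace continuity needed to match values across the interface. Since $\partial\Omega_0$ is connected and contains $\partial\Omega$ when $\Omega_0$ touches $\partial\Omega$, the trace of $p$ on $\partial\Omega_0$ should be the single constant $0$. Uniqueness for the Dirichlet problem for $\mathrm{div}(\ve\nabla\cdot)$ then yields $p=0$ in $\Omega_0$, so $e=0$ in $\Omega_0$.

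The regularity $\ve\in W^{1,\infty}$ and unique continuation across interfaces may be needed here. I expect this step to be the delicate one: the trace of an $H^1$ function that is locally constant must be shown to be globally constant on a connected boundary, and the case where $\Omega_0$ does not touch $\partial\Omega$ must be handled. In that case $p$ is constant on the connected set $\partial\Omega_0$, so $p$ is constant in $\Omega_0$ and again $\nabla p=0$.

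This gives $e=0$, hence $\tilde z=0$ and $\ker A\cap Y=\{0\}$. By Theorem~\ref{t:convergence}, every mild solution with $z^0\in Y$ tends to $0$ as $t\to+\infty$.
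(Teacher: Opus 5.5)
Your proposal is correct and has the same skeleton as the paper. Theorem~\ref{t:convergence} reduces everything to showing $\ker B=\ker A\cap Y=\{0\}$. Your reductions ($\theta=0$; $\sigma e=0$, hence $e=0$ on $\Omega_>$; $h=0$ from $\nabla\times h=0$, $h\in H_0(\mathrm{div}_\mu 0,\Omega)$ and simple connectivity; $e$ curl-free with $\mathrm{div}(\ve e)=0$ in $\Omega_0$) are exactly those in the proof of Proposition~\ref{p:last}.

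The only real difference is the last step. The paper quotes a dimension count for $H_0(\mathrm{curl}\,0,\Omega_0)\cap H(\mathrm{div}_\ve 0,\Omega_0)$ from Assous--Ciarlet--Labrunie. You instead prove the special case you need by hand. You write $e=\nabla p$, which is the representation recorded in the remark after Proposition~\ref{p:last}. The trace of $p$ is constant on each interface with a component of $\mathring{\Omega_>}$ and on each component of $\partial\Omega$, hence a single constant on the connected set $\partial\Omega_0$. Dirichlet uniqueness for $\mathrm{div}(\ve\nabla\,\cdot)$ then gives $e=0$ in $\Omega_0$.

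The citation buys more: it gives the dimension of the equilibrium set when $\partial\Omega_0$ has several components. For connected $\Omega_0$ the standard count is the number of boundary components minus one, which is precisely why a single component yields a trivial kernel. Your route is self-contained. It needs only uniform ellipticity of $\ve$ and trace matching across the Lipschitz interface. Contrary to your hedge, no unique continuation and no $W^{1,\infty}$ regularity enter at this step; they are used only inside Theorem~\ref{t:convergence}.

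One correction: a simply connected Lipschitz domain in $\R^3$ need not have connected boundary (a spherical shell is a counterexample). So you cannot normalise $p\in H^1_0(\Omega)$; $p$ is only constant on each component of $\partial\Omega$. This is harmless, because your argument only uses that the trace of $p$ on $\partial\Omega_0$ is one constant. That is exactly how you already treat the case where $\Omega_0$ does not meet $\partial\Omega$.

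The measure-theoretic issue you flag about $\Omega_{\ge}$ (empty interior but possibly positive measure) is passed over in the same way in the paper.
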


While the last theorem provides a strong stability result based on a topological property
or $\Omega_0$, we put forward a result guaranteeing strong stability based on special initial data.
 
% Strong stability, II 

\begin{theorem}[Strong stability, II] \label{t:strong-stability2}
Assume -- besides the standing assumptions of Theorem~\ref{t:convergence} -- that $\Omega$ is simply connected, $\mathring{\Omega_{\ge}}=\emptyset$ and $\mathring{\Omega_>}$ has non-empty interior with a Lipschitz boundary.
All solutions to problem 
\eqref{e:PDE-system}-\eqref{e:IC}-\eqref{e:BC} with $z^0\in Y$ tend to $0$, as 
$t\to+\infty$, provided $\mathrm{div}(\ve e^0)=0$ in $\Omega$.
In other words, the system \eqref{e:system_1} is strongly stable.
\end{theorem}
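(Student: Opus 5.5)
By Theorem~\ref{t:convergence}, every mild solution with $z^0\in Y$ converges in $H$ to some $\tilde z\in\ker A\cap Y$. So it suffices to show that this limit vanishes when, in addition, $\mathrm{div}(\ve e^0)=0$ in $\Omega$.

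The plan has three steps: identify $\ker A\cap Y$, find a conserved quantity that pins down the limit, and show that under the extra hypothesis this forces $\tilde z=0$.

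\textbf{Step 1: identify the kernel.} Take $\tilde z=(\tilde e,\tilde h,\tilde\theta)\in\ker A\cap Y$.
\begin{itemize}
\item The third row gives $\nabla\cdot(\kappa\nabla\tilde\theta)=0$ with $\tilde\theta\in H^1_0(\Omega)$, hence $\tilde\theta=0$.
\item The second row gives $\nabla\times\tilde e=0$. Since $\tilde e\in H_0(\mathrm{curl},\Omega)$ and $\Omega$ is simply connected with connected boundary, we get $\tilde e=\nabla\phi$ with $\phi\in H^1_0(\Omega)$.
\item Testing the first row, $-\sigma\tilde e+\nabla\times\tilde h=0$, against $\tilde e$ gives $\langle\sigma\tilde e,\tilde e\rangle=0$, since $\nabla\times$ applied to a gradient in $H_0(\mathrm{curl})$ integrates to zero. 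Hence $\sigma\tilde e=0$. Then $\tilde e=0$ on $\Omega_>$, and, because $\mathring{\Omega_{\ge}}=\emptyset$, $\phi$ is constant on each component of $\mathring{\Omega_>}$.
\item Now $\nabla\times\tilde h=0$ together with $\tilde h\in H_0(\mathrm{div}_\mu 0,\Omega)$ on a simply connected domain forces $\tilde h=0$.
\end{itemize}
So $\ker A\cap Y$ consists of triples $(\nabla\phi,0,0)$ with $\phi\in H^1_0(\Omega)$, $\mathrm{div}(\ve\nabla\phi)=0$ in $\Omega_0$ (the $Y$ constraint, using $\tilde\theta=0$), and $\nabla\phi=0$ on $\Omega_>$. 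This presumably matches Proposition~\ref{p:last}, whose characterization I would quote if it is available.

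\textbf{Step 2: a conserved quantity.} Take the divergence of the first equation of \eqref{e:PDE-system} with $j=0$, using $\nabla\cdot\nabla\times=0$:
\begin{equation*}
\partial_t\,\mathrm{div}(\ve e+a\theta)+\mathrm{div}(\sigma e)=0
\end{equation*}
in the distributional sense. Pair this with a test function of the form $\psi=\nabla\phi$, where $\phi\in H^1_0(\Omega)$ is the potential of $\tilde e$. Since $\sigma\nabla\phi=0$, we have $\int_\Omega\langle\sigma e,\nabla\phi\rangle=\int_\Omega\langle e,\sigma\nabla\phi\rangle=0$. Hence
\begin{equation*}
\frac{d}{dt}\int_\Omega\langle\ve e+a\theta,\nabla\phi\rangle\,dx=0 .
\end{equation*}
I would justify this first for strict solutions, where the first equation can be tested directly against $\nabla\phi$ (the curl term drops because $\nabla\phi\in H_0(\mathrm{curl})$ has zero curl), and then pass to mild solutions by density. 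The upshot is that
\begin{equation*}
t\mapsto\int_\Omega\langle\ve e(t)+a\theta(t),\nabla\phi\rangle
\end{equation*}
is constant in time.

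\textbf{Step 3: the limit is zero.} Letting $t\to\infty$ and using $\theta(t)\to\tilde\theta=0$,
\begin{equation*}
\int_\Omega\langle\ve\tilde e,\tilde e\rangle=\int_\Omega\langle\ve e^0+a\theta^0,\nabla\phi\rangle .
\end{equation*}
Integrating by parts with $\phi\in H^1_0(\Omega)$ and the hypothesis $\mathrm{div}(\ve e^0)=0$, the contribution of $e^0$ vanishes. What remains is the term $\int_\Omega\langle a\theta^0,\nabla\phi\rangle$.

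\textbf{Main obstacle.} The hard part is disposing of this $\theta^0$ term. On $\Omega_>$ it vanishes because $\nabla\phi=0$ there. On $\Omega_0$ there is a problem: the thermal mode decays, but it may transfer charge into $\tilde e$ through the $a\theta_t$ coupling.

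What I would try first is to refine the conserved quantity, using the exact evolution of the electric displacement in $\Omega_0$: $\mathrm{div}(\ve e+a\theta)$ is conserved there and equals $0$ because $z^0\in Y$. Combining this with $\mathrm{div}(\ve e^0)=0$ should give $\mathrm{div}(a\theta^0)=0$ on $\Omega_0$, and hence $\int_{\Omega_0}\langle a\theta^0,\nabla\phi\rangle=0$ after integrating by parts. The interface terms on $\partial\Omega_0$ are handled by $\phi$ being constant on the components of $\Omega_>$ and the Lipschitz regularity of $\partial\mathring{\Omega_>}$.

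With the $\theta^0$ term gone, $\int_\Omega\langle\ve\tilde e,\tilde e\rangle=0$, so $\tilde e=0$ by the uniform positivity of $\ve$. Hence $\tilde z=0$, and every such solution tends to $0$.

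If the $a\theta^0$ term cannot be killed this way, I would interpret the hypothesis as $\mathrm{div}(\ve e^0+a\theta^0)=0$ in all of $\Omega$ (as in the definition of $Y$), which makes the argument above go through directly.
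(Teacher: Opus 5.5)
\textbf{The gap: the $\theta^0$ term cannot be removed.} From $z^0\in Y$ and $\mathrm{div}(\ve e^0)=0$ you do get $\mathrm{div}(a\theta^0)=0$ in $\mathring{\Omega_0}$. Integrating by parts over $\Omega_0$ then leaves $\sum_j c_j\Phi_j$. Here $c_j$ is the value of $\phi$ on the $j$-th component of $\mathring{\Omega_>}$, and $\Phi_j$ is the net flux of $a\theta^0$ across the interface between that component and $\Omega_0$. The constancy of $\phi$ is what produces these terms, not what cancels them. Nothing in the hypotheses controls $\Phi_j$: membership in $Y$ constrains the divergence only inside $\Omega_0$, and $\mathrm{div}(\ve e^0)=0$ says nothing about $\theta^0$.

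In fact no argument can close this step, because for $a\neq 0$ the statement is false. Take $\Omega=B(0,2)$, with $\sigma=I$ on $B(0,\tfrac12)$ and $\sigma=0$ elsewhere. Set $\ve=\mu=\kappa=I$, $c=\theta_0=1$, $a=(0,0,1)$, and $e^0=h^0=0$. Let $\theta^0=f(x_1,x_2)$ on the tube $T=\{x\in\Omega_0\colon x_1^2+x_2^2<\delta^2,\ x_3<0\}$ joining the bottom of the small ball to $\partial\Omega$, and $\theta^0=0$ elsewhere; here $\delta<\tfrac12$ and $0\le f\in C_0^\infty$, $f\not\equiv 0$. Since $\theta^0$ is constant on each vertical segment of $\mathring{\Omega_0}$, we have $\mathrm{div}(a\theta^0)=\partial_{x_3}\theta^0=0$ there. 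So $z^0\in Y$, and trivially $\mathrm{div}(\ve e^0)=0$. The kernel contains $(\nabla\phi,0,0)$ with $\phi=1$ on $B(0,\tfrac12)$ and $\phi=\tfrac23(|x|^{-1}-\tfrac12)$ on $\Omega_0$. Your conserved quantity equals $\int_T f\,\partial_{x_3}\phi\,dx>0$, because $\partial_{x_3}\phi=-\tfrac23 x_3|x|^{-3}>0$ on $T$. Hence $\|y(t)\|_H$ is bounded below by a positive constant and the solution does not tend to $0$. Physically, the flux of $\ve e+a\theta$ through a closed surface in $\Omega_0$ around the conducting ball is conserved. Once the temperature has decayed, that flux must be carried by a static electric field.

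\textbf{Relation to the paper's proof.} Apart from this, your route is the paper's route in different notation. Your conserved quantity is $\langle y(t),(\nabla\phi,0,0)\rangle_H$, which is constant because $\ker B=\ker B^*$. Your Step 3 says the limit is the $H$-orthogonal projection of $y^0$ onto $\ker B$, which is how the paper concludes from $y^0\perp\ker B$ via the proof of Theorem~\ref{t:convergence}. The paper, however, writes $\langle z,y^0\rangle_H=\int_\Omega\langle\nabla\phi,\ve e^0\rangle\,dx$, which drops the off-diagonal entries $\sqrt{\theta_0}\,a$ of $M$. The correct pairing is $\int_\Omega\langle\nabla\phi,\ve e^0+a\theta^0\rangle\,dx$, i.e.\ exactly the term you could not remove.

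So your fallback is the right correction. Under $\mathrm{div}(\ve e^0+a\theta^0)=0$ in $\Omega$ (or, more weakly, $\mathrm{div}(\ve e^0)=0$ together with $\Phi_j=0$ for all $j$), your Steps 1--3 give a complete proof. One further hypothesis should be stated explicitly: in Step 1 you used that $\partial\Omega$ is connected, which is not assumed. The paper's Remark uses it tacitly when it asserts $\phi\in H^1_0(\Omega)$. On a simply connected shell, $\phi$ may take different constants on the components of $\partial\Omega$. Then even the $\ve e^0$ term becomes a combination of boundary fluxes of $\ve e^0$ that need not vanish.
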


% (Sub)section: Derivation of the PDE model

\subsection{Derivation of the PDE model} \label{ss:derivation}
In this section we briefly discuss the constitutive assumptions
underlying the coupled system \eqref{e:PDE-system}. The presentation is
intended only to motivate the model; the subsequent analysis can be
read independently.

We consider a rigid thermoelectromagnetic body occupying a bounded
region $\Omega\subset\mathbb R^3$ with sufficiently smooth boundary
$\partial\Omega$. Since the body is assumed rigid, mechanical effects
are neglected.
The electromagnetic fields satisfy Maxwell's equations
\begin{equation}\label{e:Maxwell-equations}
\begin{aligned}
D_t-\nabla\times h &= -j,
\\
B_t+\nabla\times e &=0,
\end{aligned}
\end{equation}
where $e$ and $h$ denote the electric and magnetic fields,
respectively, while $D$, $B$, and $j$ are the electric displacement,
magnetic induction, and current density.

The local balance of energy is given by
({\sl cfr.}~\cite[pp.~25 and 204]{FabMor})
\begin{equation}\label{energyBalance-5_12}
  w_t
  =
  -\nabla\cdot q
  + r
  + \langle h,B_t\rangle
  + \langle e,D_t\rangle
  + \langle e,j\rangle .
\end{equation}
Here $w$ is the internal energy density, $q$ is the heat flux, and
$r$ is an internal heat source.

We adopt a linear phenomenological constitutive model describing a
thermoelectromagnetic medium near a reference equilibrium temperature
$\theta_0>0$. The constitutive assumptions are consistent with a
quadratic free-energy density of the form
\begin{equation}
  \label{eq:free-energy}
   \psi(e,\vartheta)
  =
  \frac12 \langle \varepsilon e,e\rangle
  +
  \vartheta\, \langle a,e\rangle
  +
  \frac c2 \vartheta^2 ,
\end{equation}
where $\varepsilon$ is the electric permittivity tensor,
$a\in\mathbb R^3$ is a thermal polarization vector,
$c>0$ is an effective heat-capacity coefficient,
and
\[
  \vartheta=\theta-\theta_0
\]
denotes the temperature variation around the reference state. In \eqref{eq:free-energy}, the term $\langle \varepsilon e,e\rangle$ corresponds to the anisotropic electrostatic energy density associated with the permittivity tensor
$\varepsilon$.

The corresponding constitutive relation for the electric displacement
is obtained by differentiating the free-energy density with respect to
the electric field:
\[
  D=\partial_e\psi ,
\]
which yields
\begin{equation}\label{eq:electric-displacement}
  D=\varepsilon e+a\vartheta .
\end{equation}
The coupling term $a\vartheta$ models thermally induced polarization
effects in anisotropic polar media, such as pyroelectric or
ferroelectric materials.

We further assume the constitutive relations
\begin{equation}\label{eq:constitutive-laws}
  B=\mu h,
  \qquad
  j=\sigma e,
\end{equation}
together with Fourier's law
\begin{equation}\label{eq:Fourier-law}
  q=-\kappa\nabla\theta .
\end{equation}

Following \cite[Section~7.19.1]{FabMor}, we adopt the constitutive
relation
\begin{equation}\label{eq:thermal-source}
  s
  =
  \theta_0\langle \varepsilon^{-1}a,D_t\rangle
  +
  c\vartheta_t ,
\end{equation}
where $s$ denotes the thermal source contribution.

Substituting \eqref{eq:electric-displacement} into
\eqref{eq:thermal-source}, we obtain
\[
  s
  =
  \theta_0\langle a,e_t\rangle
  +
  \theta_0\langle \varepsilon^{-1}a,a\rangle \vartheta_t
  +
  c\vartheta_t .
\]
As $\vartheta_t=\theta_t$, this becomes
\begin{equation}\label{eq:thermal-source-expanded}
  s
  =
  \theta_0\langle a,e_t\rangle
  +
  \big(
    c+\theta_0\langle \varepsilon^{-1}a,a\rangle
  \big)\theta_t .
\end{equation}
The positivity assumptions on $\varepsilon$ and $c$ ensure coercivity
of the underlying quadratic free-energy density.
 
Using
\[
  s
  =
  -\nabla \cdot q + r
\]
together with \eqref{eq:Fourier-law}, we obtain
\begin{equation}\label{e:derivation-3rd}
  \big(
    c+\theta_0\langle \varepsilon^{-1}a,a\rangle
  \big)\theta_t
  +
  \theta_0\langle a,e_t\rangle
  -
  \nabla \cdot (\kappa\nabla\theta)
  =
  r .
\end{equation}

On the other hand, substituting
\eqref{eq:constitutive-laws}--\eqref{eq:electric-displacement}
into Maxwell's equations \eqref{e:Maxwell-equations} gives
\begin{equation}\label{e:derivation-1st}
  \varepsilon e_t
  +
  a\theta_t
  -
  \nabla\times h
  +
  \sigma e
  =
  -j
\end{equation}
and
\begin{equation}\label{e:derivation-2nd}
  \mu h_t
  +
  \nabla\times e
  =
  0 .
\end{equation}

Equations
\eqref{e:derivation-1st},
\eqref{e:derivation-2nd},
and
\eqref{e:derivation-3rd}
coincide precisely with \eqref{e:PDE-system}.

The constitutive law \eqref{eq:electric-displacement} should be viewed
as a linearized phenomenological model for anisotropic media exhibiting
thermally induced polarization effects. Examples include pyroelectric
or ferroelectric materials such as lithium niobate and barium
titanate.

\subsection{Previous work}
The question of stabilization of certain exemplary conservative hyperbolic equations such as the wave equation in bounded domains via some kind of damping acting either in the interior of the domain or on its boundary -- possibly, on a portion of it -- is an old problem, with classical and novel ideas explored from the seventies on (with former contributions by Benchimol, Slemrod, Quinn and Russell, Russell, Chen, Lagnese, J.-L. Lions, Haraux, Triggiani, Komornik and Zuazua, and others). 
% As for the case n = 2 we refer to T CHEUGOUE [1].
In the case of a linear wave equation with a damping term effective on the entire domain,
all solutions tend to zero exponentially fast.
If the dissipation is effective only in a measurable subset $\omega$ with positive measure, even in the linear case an exponential decay requires appropriate geometric conditions on
$\omega$; see e.g. \cite{Har_1989} and \cite{Z_1990}.

A great deal of research has been subsequently devoted to the stabilization of wave (and plate) equations with boundary damping; see e.g. the book by Komornik \cite{K1994}.
On this respect, and more specifically as for the geometric constraints, we have to mention the celebrated insightful result by Bardos {\em et al.} \cite{BaLeRa}, relevant to observation, control and stabilization of waves from the boundary.

The next step has been pinpointing the stability properties of wave equations with nonlinear damping, an interesting and challenging problem which is now well understood. 
When strong stability still holds true, a question which naturally arises concerns the type of decay rates of the energy.
\\
(It is outside the scope of this article to provide a thorough overview on the subject of stabilization of wave equations with nonlinear damping.
Seminal work based on the multipliers' method date back to the late 1980s and early 1990s. 
We will mention the paper by Lasiecka and Tataru \cite{LasTat} pertaining to semilinear wave equations with boundary feedback, whose method of proof relies on the construction of an appropriate concave function which controls the growth of the damping at the origin; the final result is very general and describes the decay rates of solutions to semilinear wave equations via the solution to an associated nonlinear ordinary differential equation.
The reader is referred to the review article by Alabau-Boussouira \cite{Alabau2010}, along with the references therein, for a discussion about the subsequent advances -- primarily, yet not exclusively, by the same author -- aimed at providing easily computable and sharper upper energy decay estimates. The line of argument utilized -- and which has proved fruitful over time -- entails the development of appropriate nonlinear integral inequalities pertaining to the energy of the physical system.)

\vspace{1mm}
Coming back to the core topic of this article, it is known that the energy of the (uncoupled) Maxwell system is conserved if the conductivity $\sigma$ vanishes everywhere in $\Omega$;
see e.g.~\cite{Ell19}.
Indeed, the dissipation originates in the current $-\sigma e$ itself, with
this term playing the role of the (sole) damping, whose effectiveness/strenght
depends on the support of $\sigma$.    
An accepted outcome is that exponential stability holds true, provided $\sigma$
is positive definite on the whole $\Omega$; see e.g. \cite[Theorem~2.1]{Ell19}, proved under the assumption that $\Omega$ is simply connected.
Former contributions to the study of this problem are due to Phung \cite{Ph2000} in the special case of constant isotropic coefficients, with their result extended to the variable isotropic coefficients case by Nicaise and Pignotti \cite{nicaise-pignotti2005}.

In this context we want to mention also the recent work by Nutt and Schnaubelt \cite{NS26} where exponential stability of the semigroup generated by Maxwell's equations is established under the hypothesis that $\sigma$ is uniformly positive definite in a collar of the boundary and that the coefficients $\ve$ and $\mu$ satisfy a non-trapping condition.

Pinpointing the property of strong stability for the Maxwell system in the presence 
of {\em localized} conductivity is a much more challenging problem.
For a brief overview of the subject and most recent advances, we refer the reader to the work by Nicaise and Schnaubelt \cite{NS25}, where -- under a special constraint on the support of
$\sigma$ and having set $\ve, \mu=1$ -- it is shown that the semigroup underlying the dynamics is polynomially stable, at rate $1/2$.
The method of proof relies on the renowned, powerful resolvent criterion by Borichev and Tomilov \cite {BT10}.

\vspace{1mm}
The use of certain abstract characterizations % tauberian-type results 
-- say, within the {\em frequency domain} --
such as the Gearhart-Pr\"uss theorem and the aforementioned Borichev-Tomilov criterion 
proves very effective for pinpointing the decay rates of linear PDE systems in the presence
of some form of dissipation.
Indeed, a great deal of favorable results regarding exponential or polynomial stability for significant coupled systems of PDE, quite distinct in nature from one another, have been obtained in the past twenty years or so.
A common feature of these systems is that they comprise a conservative hyperbolic component -- such as an undamped wave equation or system of elasticity -- coupled (possibly, along an interface) with a parabolic-like equation supplying (intrinsic) dissipation; the stability of the overall dynamics indicates that the dissipation has propagated through the coupling.
Typical examples are thermoelastic systems and composite PDE systems which describe acoustic- or fluid-structure interactions (FSI); 
see \cite{AvLa-Trieste} (for a clever use of the multipliers' method, and in the absence of dissipative mechanisms in the Kirchhoff plate equation),   \cite{AvBu2014} (where the exponential decay rates for a 3D-2D flow-structure interaction is established via the Gearhart-Pr\"uss theorem), and \cite{AvLasTrig2016} (attaining the optimal decay rate for a 3D-3D heat-wave model, thus solving a longstanding open problem until then).
It is important to emphasize that applying the said criteria requires that appropriate, model-specific PDE estimates (for a corresponding static PDE system) are established; to this end, microlocal analysis might be necessitated and eventually come into play, like e.g. in \cite{AvLasTrig2016}.

Our PDE system \eqref{e:PDE-system} is linear and couples the hyperbolic Maxwell equations with the parabolic heat equation over the same region. This type of coupling has been studied in the past, in particular the coupling of the (isotropic) elastic wave equations with the heat equation. Starting with the works by Dafermos \cite{Da68}, criteria for strong and exponential stability were studied in \cite{HLP93}, \cite{LebZua}, and \cite{Ko00}. 
At issue is here that the dissipation impacts only the pressure waves. 
Likewise, in our system the dissipation acts only the electric field and not the magnetic field.

% OUTLINE

\subsection{An outline of the paper}
The paper is organized as follows.
In Section~\ref{s:functional-analysis} we introduce notations and set up a functional-analytic framework for the IBVP \eqref{e:PDE-system}-\eqref{e:IC}-\eqref{e:BC}.
The grounds and tools for the well-posedness result stated in Theorem~\ref{t:well-posed} are
provided by semigroup theory; see Proposition~\ref{p:Lumer-P}.    
  
The analysis of the issue of stability is carried out in Section~\ref{s:stability}.
Primarily based upon PDE methods, it brings about the respective proofs of the Theorems~\ref{t:exp-stability}, \ref{t:convergence}, \ref{t:strong-stability}, \ref{t:strong-stability2}.

Starting from energy identities pertaining to solutions in a mild or strict sense, an observability inequality is key to the proof of exponential stability,
under the assumption that the conductivity $\sigma$ is positive definite on $\Omega$.

When $\sigma$ is only semipositive definite on $\Omega$, higher regularity of the electric permittivity $\varepsilon$ and the magnetic permeability $\mu$ is called for. 
Then, a preparatory key step towards convergence to equilibria %Theorem~\ref{e:convergence}
is provided by Proposition~\ref{p:pre-strong}. 
The additional work in Section~\ref{ss:expand} fosters a deeper understanding of the set of equilibria, thereby achieving strong stability in specific either topological or analytic setting.

Lastly, additional remarks on the constitutive assumptions, admissible
materials, and related literature are collected in an Appendix.

% Section: PRELIMINARIES, WELL-POSEDNESS

\section{Functional-analytic setup. Well-posedness} \label{s:functional-analysis}
In this section, we introduce notation and set a functional-analytic framework where
it is shown that the IBVP is well-posed.

\smallskip
\noindent
{\em Symmetrization of the system.} 
Recall the original system in \eqref{e:system_0}, i.e. $Nz_t=Az+G$, with $N$ and $A$ as
in \eqref{e:N} and \eqref{e:A}, respectively. % and $G$ itself defined in \eqref{e:system_0}.
Taking  
\begin{equation} \label{e:diagonal-U}
U=\Mat{I_6 &0\\0&\theta_0^{1/2}}\,.
\end{equation}
we see that 
\begin{equation} \label{e:matrixM}
U^{-1}NU =\Mat{ \ve & 0 & \sqrt{\theta_0}a \\ 0& \mu &  0 \\ 
\sqrt{\theta_0} a^\top &0 & c+\theta_0 \langle \ve^{-1} a,a\rangle}=:M\,, 
\end{equation}
and for the new dependent variable $y= U^{-1}z$ it holds
\begin{equation*}
My_t = Ay+F\,, \quad \text{with $F=U^{-1}G$.}
\end{equation*}
Using the Schur complement formula (see e.g. \cite[Prop.~3.9]{Ser10}), we compute 
\begin{equation*}
\det M= c(\det \ve)(\det \mu)
\end{equation*}
and observe also that the eigenvalues of $M$ are the eigenvalues of $\ve$, $\mu$ as well as $c$. 
Hence, the matrix $M$ is positive definite and we can introduce on the state space $H$
the inner product
\begin{equation}\label{e:inner}
\langle z,w \rangle_{H} = \int_\Omega \langle Mz,w\rangle\,dx\,.
\end{equation}
The norm $\|\cdot\|_H$ induced by this inner product is equivalent to the one induced by the Euclidean inner product.
Although the explicit form of the inverse $M^{-1}$ may not be needed in the computations that follow, we provide it below for completeness and the readers' convenience: % but
\[
M^{-1} = \Mat{\ve^{-1} + \theta_0 \ve^{-1}aa^\top \ve^{-1}/c
&0& -\sqrt{\theta_0}\ve^{-1} a/c \\ 0&\mu^{-1} &0 \\ 
-\sqrt{\theta_0}a^\top \ve^{-1}/c    &0& c^{-1}}\,.
\] 
Still, we write the system in the form
\begin{equation} \label{e:system_1}
y_t = M^{-1}Ay + M^{-1}F\,.
\end{equation}

\smallskip
As the IBVP \eqref{e:PDE-system}-\eqref{e:IC}-\eqref{e:BC} is equivalent to a Cauchy problem for the abstract system \eqref{e:system_1}, a result of well-posedness will follow from a generation result for the operator $M^{-1}A$ in $H$. 
This is accomplished via the Lumer-Phillips Theorem. 

% LUMER-PHILLIPS

\begin{proposition} \label{p:Lumer-P}
Let $M$ and $A$ be the linear operators defined in \eqref{e:matrixM} and \eqref{e:A}-\eqref{e:domA}, respectively.
Then the operator $M^{-1}A$ is densely defined, closed, and $m$-dissipative on $H$, namely, 
\begin{itemize}
\item
$\Re \langle M^{-1}Ay,y \rangle_H \le 0$ for all $y\in D(A)$, and 
\item
for all $F \in H$ the equation $(I-M^{-1}A)y=F$ has a unique solution in $D(A)$. 
\end{itemize}
\end{proposition}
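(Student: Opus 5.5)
Since $\langle M^{-1}Ay,y\rangle_H=\int_\Omega\langle MM^{-1}Ay,y\rangle\,dx=\int_\Omega\langle Ay,y\rangle\,dx$, dissipativity reduces to a computation with $A$ alone. For $y=(e,h,\theta)\in D(A)$ I will integrate by parts in each block. For the curl terms, $e\in H_0(\mathrm{curl},\Omega)$ and $h\in H(\mathrm{curl},\Omega)$ give $\int_\Omega\langle\nabla\times h,e\rangle=\int_\Omega\langle h,\nabla\times e\rangle$, with no boundary term because $\nu\times e=0$. This Green formula holds on Lipschitz domains by density of $C_c^\infty$ in $H_0(\mathrm{curl})$. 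Hence
\[
\int_\Omega\langle\nabla\times h,e\rangle-\langle\nabla\times e,h\rangle\,dx=2i\,\mathrm{Im}\int_\Omega\langle h,\nabla\times e\rangle\,dx ,
\]
which is purely imaginary. For the heat block, $\theta\in H^1_0$ gives $\int_\Omega\langle\nabla\cdot\kappa\nabla\theta,\theta\rangle=-\int_\Omega\langle\kappa\nabla\theta,\nabla\theta\rangle\le0$. Finally $-\int_\Omega\langle\sigma e,e\rangle\le0$ because $\sigma$ is symmetric semidefinite. Together these give $\Re\langle M^{-1}Ay,y\rangle_H=-\int_\Omega\langle\sigma e,e\rangle-\int_\Omega\langle\kappa\nabla\theta,\nabla\theta\rangle\le0$.

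Density of $D(A)$ in $H$ is clear, since $D(A)$ contains $C_c^\infty(\Omega,\C^7)$. Closedness will follow from the range condition together with dissipativity, because an $m$-dissipative operator is closed. It can also be checked directly: $A$ is the block-diagonal combination of a bounded perturbation by $-\sigma$, the closed Maxwell operator on $H_0(\mathrm{curl})\times H(\mathrm{curl})$, and the closed Dirichlet Laplacian-type operator. Multiplying by the bounded, boundedly invertible $M^{-1}$ preserves closedness.

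The main step is the range condition. $(I-M^{-1}A)y=F$ is equivalent to $My-Ay=MF=:g\in H$, and I will solve it with the Lax--Milgram lemma. Write $y=(e,h,\theta)$, $g=(g_1,g_2,g_3)$, and $b=\sqrt{\theta_0}a$, $d=c+\theta_0\langle\ve^{-1}a,a\rangle$. The system reads
\[
\ve e+b\theta+\sigma e-\nabla\times h=g_1,\qquad
\mu h+\nabla\times e=g_2,\qquad
\langle b,e\rangle+d\theta-\nabla\cdot\kappa\nabla\theta=g_3 .
\]
I eliminate $h=\mu^{-1}(g_2-\nabla\times e)$ and seek $(e,\theta)\in V:=H_0(\mathrm{curl},\Omega)\times H^1_0(\Omega)$. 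Testing the first equation with $\phi$ and the third with $\psi$ gives the sesquilinear form
\[
\mathcal B((e,\theta),(\phi,\psi))=\int_\Omega\Big\langle M_{e\theta}\binom{e}{\theta},\binom{\phi}{\psi}\Big\rangle+\langle\sigma e,\phi\rangle+\langle\mu^{-1}\nabla\times e,\nabla\times\phi\rangle+\langle\kappa\nabla\theta,\nabla\psi\rangle\,dx ,
\]
where $M_{e\theta}$ is the $4\times4$ principal submatrix of $M$ in the $(e,\theta)$ variables. That submatrix is uniformly positive definite because $M$ is (its eigenvalue bounds come from $\ve,\mu,c$ as computed above). So $\mathcal B$ is bounded and coercive on $V$, and the right-hand side $\int\langle g_1,\phi\rangle+\langle\mu^{-1}g_2,\nabla\times\phi\rangle+\langle g_3,\psi\rangle$ is a bounded antilinear functional on $V$. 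Lax--Milgram therefore gives a unique $(e,\theta)$.

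It remains to recover the strong equations and $D(A)$ membership, which is where care is needed. Define $h:=\mu^{-1}(g_2-\nabla\times e)\in L^2$. Testing with $\phi\in C_c^\infty$ shows that, in distributions, $\nabla\times h=\ve e+b\theta+\sigma e-g_1\in L^2$, so $h\in H(\mathrm{curl},\Omega)$. Testing with $\psi\in C_c^\infty$ gives $\nabla\cdot\kappa\nabla\theta=\langle b,e\rangle+d\theta-g_3\in L^2$. Together with $e\in H_0(\mathrm{curl})$ and $\theta\in H^1_0$ this places $y$ in $D(A)$. Uniqueness follows from dissipativity: if $(I-M^{-1}A)y=0$, then $\|y\|_H^2\le\Re\langle(I-M^{-1}A)y,y\rangle_H=0$. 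Since $D(A)$ imposes no boundary condition on $h$, no compatibility issue arises there.
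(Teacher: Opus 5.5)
Your proposal is correct and follows essentially the same route as the paper: dissipativity by integration by parts, then the range condition via Lax--Milgram on $H_0(\mathrm{curl},\Omega)\times H^1_0(\Omega)$ after eliminating $h$, recovering $h\in H(\mathrm{curl},\Omega)$ and the heat equation in the sense of distributions. You get closedness and uniqueness from $m$-dissipativity, where the paper uses a direct limit argument and Lax--Milgram; either works. One small correction: your $M_{e\theta}$ is uniformly positive definite not because its eigenvalues are those of $\ve$ and $c$ (they are not; only $\det M_{e\theta}=c\det\ve$ factorizes), but because of the identity $\langle M_{e\theta}(e,\theta),(e,\theta)\rangle=\langle\ve(e+\ve^{-1}b\theta),e+\ve^{-1}b\theta\rangle+c|\theta|^2$ together with $\ve^{-1}b\in L^\infty$, which is exactly the completing-the-square step the paper uses for coercivity.
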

 
\begin{proof}
Since $C^\infty_0(\Omega,\C^7)$ is dense in $H$ and $C_0^\infty(\Omega,\C^7)\subset D(A)$, the operator $A$ is densely defined. 
Furthermore, if $y_n\to y$ and $M^{-1}Ay_n\to z$ in $H$ for some sequence $\{y_n\}\subset D(A)$, then for all $\varphi \in C_0^\infty(\Omega, \C^7)$, 
\begin{equation*}
\langle z,\varphi \rangle_H= 
\lim_{n\to \infty}\langle M^{-1}Ay_n,\varphi \rangle_H =
\lim_{n\to \infty}\langle y_n,M^{-1}A^\top \varphi\rangle_H =
\langle y,M^{-1}A^\top \varphi \rangle_H
\end{equation*}
where 
\begin{equation}\label{e:adjoint-of-A}
A^\top = \Mat{ -\sigma & -\nabla\times & 0 \\  \nabla\times &0&0 \\
       0&0&\nabla\cdot \kappa \nabla}\;.
\end{equation}

\noindent
\smallskip
Hence, $Ay=z$ and thus $y\in D(A)$ and $A$ is closed. 

Regarding the dissipativity, one computes 
\begin{equation*}
\Re \langle M^{-1}Ay,y\rangle_H 
% added
= \Re\int_\Omega \langle A y,y\rangle\,dx
=-\int_\Omega \langle \sigma y_1,y_1\rangle\,dx 
- \int_\Omega \langle \kappa \nabla y_3,\nabla y_3\rangle\,dx\,,
 \end{equation*}
which establishes that $A$ is dissipative in $H$. 
Next we solve the equation $(M-A)y = f$ for $f\in H$. 
For $y_1,z_1\in H_0(\mathrm{curl},\Omega)$ and $y_3,z_3\in H^1_0(\Omega)$ consider the bilinear form
\[
 \begin{split}
 B(y,z) =& \int_\Omega \langle \ve y_1,z_1\rangle \,dx
 + \sqrt{\theta_0}\int_\Omega y_3 \langle a,z_1\rangle\,dx +
 \int_\Omega \mu^{-1} \langle \nabla\times y_1,\nabla\times z_1\rangle\,dx 
 \\
 & + \int_\Omega \langle \sigma e,e\rangle\,dx
 + \int_\Omega (c+\theta_0 \langle \ve^{-1} a,a\rangle) y_3\overline{z_3}\,dx 
 + \sqrt{\theta_0} \int_\Omega \langle y_1,a\rangle \overline{z_3}\,dx 
 \\
 & + \int \langle \kappa \nabla y_3,\nabla z_3 \rangle\,dx\,;
 \end{split}
\]
this bilinear form is continuous and coercive on 
$H_0(\mathrm{curl},\Omega)\times H^1_0(\Omega)$, since 
\[
 \begin{split}
 B(y,y) =& \int_\Omega \langle \ve^{1/2}y_1+\sqrt{\theta_0}y_3 \ve^{-1/2}a,  \ve^{1/2}y_1+\sqrt{\theta_0}y_3 \ve^{-1/2}a\rangle\,dx + c \int_\Omega |y_3|^3\,dx 
 \\
 & + \int_\Omega \langle \sigma e,e\rangle\,dx
 + \int_\Omega \mu^{-1} \langle \nabla\times y_1,\nabla\times y_1\rangle\,dx
 + \int_\Omega \langle \kappa \nabla y_3,\nabla z_3 \rangle\,dx\,.
 \end{split}
\] 
By the triangle inequality  
\[
 \alpha \langle \ve^{1/2}y_1+\sqrt{\theta_0}y_3 \ve^{-1/2}a,  \ve^{1/2}y_1+\sqrt{\theta_0}y_3 \ve^{-1/2}a\rangle \ge 
 \frac{\alpha}{2} \langle \ve y_1,y_1 \rangle 
 - \alpha \theta_0 |y_3|^2 \langle \ve^{-1}a,a\rangle 
\]  
and hence, there exists an $\alpha \in (0,1]$ such that 
\[
 c -\alpha \theta_0 \langle \ve^{-1}a,a\rangle \ge 0
\]
for all $x\in \Omega$. 
This way one obtains
\[
 B(y,y) \gtrsim  \|y_1\|^2_{H_0(\mbox{curl},\Omega)}+ \|y_3\|^2_{H^1_0(\Omega)}\,.
\]   
Hence, for any $f=(f_1,f_2,f_3)\in H$ there exists a unique $(y_1,y_3)\in H_0(\mathrm{curl},\Omega)\times H^1_0(\Omega)$ such that 
\[
 B(y,z) = \int_\Omega \langle f_1,z_1\rangle + \langle \mu^{-1}f_2,\nabla \times z_1\rangle + f_3\overline{z_3}\,dx\,.
\]  
Define now $y_2 \in L^2(\Omega,\C^3)$ by 
\[
 y_2 =- \mu^{-1}\nabla \times y_1 + \mu^{-1} f_2\,.
\]
Choosing $z_1 \in C_0^\infty(\Omega, \C^3)$ and $z_3=0$ we infer that 
\[
 \ve y_1 + \sqrt{\theta_0} a y_3-f_1  
 =\nabla\times (-\mu^{-1}\nabla \times y_1+\nabla\times (\mu^{-1}f_2)
\]
in the sense of distributions. 
Hence $y_2 \in H(\mbox{curl},\Omega)$ and 
\[
 \ve y_1 + \sqrt{\theta_0} ay_3 +\nabla\times y_2 = f_1\,.
\]    
Choosing $z_1=0$ and $z_3\in C_0^\infty(\Omega,\C)$ we infer that
\[
 (c+\theta_0 \langle \ve^{-1} a,a\rangle) y_3 + 
 \sqrt{\theta_0} \langle y_1,a\rangle - \nabla\cdot (\kappa \nabla y_3) =f_3
\]
in the sense of distributions. 
We have proved $(y_1,y_2,y_3)\in D(A)$ and that $(M-A)y=f$, which establishes the second assertion, thus concluding the proof.   
\end{proof}

{\em Proof of Theorem~\ref{t:well-posed}.}
By Proposition~\ref{p:Lumer-P} and the Lumer-Phillips Theorem the operator $M^{-1}A$ generates a $C_0$-semigroup $\{S(t)\}_{t\ge 0}$ of contractions in $H$. 
Hence, for $y^0\in H$, $G\in L^1((0,T),H)$ there exists a unique mild solution $y\in C([0,T],H)$ to the IBVP \eqref{e:PDE-system}-\eqref{e:IC}-\eqref{e:BC}.
Furthermore, if $y^0\in D(A)$, $G \in C([0,T],H)$ and either $G\in L^1((0,T),D(A))$ or $G\in W^{1,1}((0,T),H)$, then there exists a unique strict -- and hence classical -- solution $y\in C([0,T],D(A))\cap C^1([0,T],H)$. Here $T>0$.

Since $z=Uy$, with $U$ the diagonal matrix defined by \eqref{e:diagonal-U}, the conclusion follows.
\qed

% Section: LONG TERM BEHAVIOUR

\section{Long-time behavior} \label{s:stability}
From now on we work with the homogeneous system, that is $j=0$ and $r=0$ in \eqref{e:PDE-system}. 
The conditions on the coefficient functions are the same as stipulated in the introduction.

\smallskip
\noindent
{\em Energy identities.} 
We will work with the variable $y=(y_1,y_2,y_3) = (e,h,\theta_0^{1/2}\theta)$.
Indeed, owing to the simple relation between $y$ and $z$, the stability properties
established in regard to $y$ are plainly inherited by the actual state variable $z$.
By using a standard argument, we know that strict solutions satisfy the energy identity
\begin{equation}\label{e:energy-id}
 \frac{1}{2}\|y(t_2)\|_H^2 - \frac{1}{2}\|y(t_1)\|_H^2 =
 -\int_{t_1}^{t_2} \int_\Omega \langle \sigma y_1,y_1\rangle\,dx\,dt 
  - \int_{t_1}^{t_2} \int_\Omega \langle \kappa \nabla y_3,\nabla y_3\rangle\,dx\,dt\,,
\end{equation}
for all $0\le t_1<t_2<\infty$. 
Since each mild solution can be obtained as a limit of strict solutions, the identity
\eqref{e:energy-id} implies that mild solutions satisfy 
\begin{equation*}
\langle \kappa \nabla y_3,\nabla y_3\rangle \in L^1((t_1,t_2)\times \Omega)
\end{equation*}
and consequently, because of the uniform positivity of $\kappa$,
\begin{equation*}
\nabla y_3 \in L^2((0,\infty)\times \Omega)=L^2((0,\infty),L^2(\Omega))
\end{equation*} 
and thus 
$y_3 \in L^2((0,\infty),H^1_0(\Omega))$. 
The identity \eqref{e:energy-id} is valid for mild/weak solutions.
Note that it also implies that 
\begin{equation*}
\begin{cases}
y_3(t) \longrightarrow 0 
\\
\sigma^{1/2} y_1(t) \longrightarrow 0
\end{cases}
 \; 
\mbox{ a.e. in } H^1_0(\Omega) \mbox{ and } L^2(\Omega,\C^3), 
\mbox{ respectively, as }t\to +\infty\,.
\end{equation*}  
Since $y_1 \in C([0,\infty),L^2(\Omega,\C^3))$, the convergence $\|\sigma^{1/2} y_1(t)\|_{L^2(\Omega)} \longrightarrow 0$ is pointwise. 

\begin{remark}
 Our notion of energy here is different from the physical energy discussed in Section \ref{ss:derivation}. Here we take as energy the weighted $L^2$-norm of $y$. In contrast, formula (\ref{energyBalance-5_12}) stipulates as energy the integral of $w_t$ over spacetime. 
\end{remark}  
Similarly, since time derivatives of strict solutions are mild/weak solution, we have the energy identity
\begin{equation}\label{e:identity2}
\begin{split}
& \frac{1}{2}\|\partial_ty(t_2)\|_H^2 
 - \frac{1}{2}\|\partial_ty(t_1)\|_H^2 =
\\[1mm] 
& \myspace 
-\int_{t_1}^{t_2} \int_\Omega 
 \langle \sigma \partial_ty_1,\partial_ty_1\rangle\,dxdt 
  - \int_{t_1}^{t_2} \int_\Omega 
  \langle \kappa \nabla \partial_ty_3,\nabla \partial_ty_3\rangle\,dxdt\,,
\end{split}
\end{equation}
for strict solutions with the additional regularity $y_3 \in H^1((0,\infty),H^1_0(\Omega))$.
Since strict solutions satisfy $y_3 \in C([0,\infty),H^1_0(\Omega))$, we have pointwise convergence 
\begin{equation*}
\|y_3(t)\|_{H^1(\Omega)}\longrightarrow 0\,.
\end{equation*} 
Furthermore, $\|\nabla \partial_ty_3\|^2_{L^2(\Omega)} \longrightarrow 0$ a.e., as $t\to +\infty$.

As anticipated in Section~\ref{s:main}, we restrict %ourseleves 
to the function space $Y$ defined in \eqref{e:spaceY}. 
We observed already then that the $C_0$-semigroup $S(t)$ maps $Y$ into itself. The generator of $S(t)\big|_Y$ is then the densely defined and closed operator 
\[
 B := M^{-1}A\big|_{Y}\colon Y\to Y \quad \mbox{ with } 
 \quad D(B)=D(A)\cap Y\;.
\] 
Abusing notation, the restriction of the semigroup $S(t)$ to $Y$ will be denoted also by 
$S(t)$.

% HERE THM on EXP STABILITY
For the proof we need to work with a Helmholtz decomposition of the electromagnetic field
$(y_1,y_2)$. 

\begin{proposition}\cite[Prop.~2]{Ell19} \label{p:helmholtz}
Suppose that $\Omega$ is simply connected and that $y=(y_1,y_2,y_3) = (e,h,\theta_0^{1/2}\theta)\in C([0,\infty),Y)$ is a mild solution to system \eqref{e:system_1}.
Then there exist vector fields 
\begin{equation*}
a\in C([0,\infty),H^1_{{\rm t}0}(\Omega)) \cap C^1([0,\infty),L^2(\Omega,\mathbb{C}^3))\,,
\; 
h_2 \in C([0,\infty),\mathbb{H}_2(\Omega))\,,
\end{equation*}
and a scalar function $p\in C([0,\infty),H_0^1(\Omega))$ such that
\begin{equation} \label{e:orthogonal}
\mu y_2 = \nabla \times a \quad \mbox{ and } \quad y_1= -\partial_t a + \nabla p+h_2\,.
\end{equation} 
Furthermore, $\|a(t)\|_{L^2(\Omega)} \lesssim \|\nabla\times a(t)\|_{L^2(\Omega)}$, uniformly on compact time intervals.

Here $H^1_{{\rm t}0}(\Omega)=H_0(\mathrm{curl},\Omega)\cap H(\mathrm{div},\Omega)$ and
$\mathbb{H}_2 = H_0(\mathrm{curl}\,0,\Omega)\cap H(\mathrm{div}\,0,\Omega)$.
\end{proposition}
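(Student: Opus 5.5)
I would prove the proposition pointwise in time by a standard Hodge/Helmholtz argument, and then settle the time regularity by linearity and continuity. Fix $t\ge 0$.

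\emph{Step 1: the vector potential.} The field $\mu y_2(t)$ lies in $H_0(\mathrm{div}_\mu\,0,\Omega)$, since $y(t)\in Y$. Because $\Omega$ is simply connected, a divergence-free field with vanishing normal trace is a curl. Concretely, I would take $a(t)\in H_0(\mathrm{curl},\Omega)\cap H(\mathrm{div}\,0,\Omega)$, orthogonal to the harmonic fields $\mathbb{H}_2$, with $\nabla\times a(t)=\mu y_2(t)$. One way to obtain it is to minimize $\|\nabla\times a - \mu y_2\|$ over this space, or equivalently to solve the variational problem $\int\langle\nabla\times a,\nabla\times\phi\rangle=\int\langle\mu y_2,\nabla\times\phi\rangle$. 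Coercivity then comes from the Friedrichs-type inequality $\|a\|\lesssim\|\nabla\times a\|+\|\mathrm{div}\,a\|$ on $H_0(\mathrm{curl})\cap H(\mathrm{div})\cap\mathbb{H}_2^\perp$. That inequality holds on Lipschitz domains, by Weber's compactness, and gives the stated bound $\|a\|\lesssim\|\nabla\times a\|$. The map $\mu y_2\mapsto a$ is bounded and linear, so $a\in C([0,\infty),H^1_{{\rm t}0}(\Omega))$ follows from $y_2\in C([0,\infty),L^2)$.

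\emph{Step 2: time differentiability of $a$.} For strict solutions, the second equation $\mu\partial_t y_2=-\nabla\times y_1$ gives $\nabla\times(\partial_t a + y_1)=0$. Since $y_1\in H_0(\mathrm{curl})$, $\partial_t a$ is characterized as the potential of $-\nabla\times y_1$, with the estimate $\|\partial_t a\|\lesssim\|\nabla\times y_1\|$. For mild solutions I would not differentiate directly. Instead I would define the candidate $b(t):=-P y_1(t)$, where $P$ is the $L^2$-orthogonal projection onto $H(\mathrm{div}\,0)\cap\mathbb{H}_2^\perp\cap\{\nu\times\cdot=0\}$, the closure of the curls of $H(\mathrm{curl})$ fields. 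I would then verify $a(t)=a(0)+\int_0^t b\,ds$ by integrating the second equation in time: $\mu y_2(t)-\mu y_2(0)=-\nabla\times\int_0^t y_1\,ds$, which is valid for mild solutions. Applying the potential map gives $a(t)-a(0)=-P\int_0^t y_1$, because the potential of $\nabla\times w$ for $w\in H_0(\mathrm{curl})$ is exactly $Pw$. Since $y_1$ is continuous in $L^2$, this yields $a\in C^1([0,\infty),L^2)$.

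\emph{Step 3: the remaining decomposition.} Set $w(t):=y_1(t)+\partial_t a(t)=(I-P)y_1(t)$. By the Helmholtz decomposition $L^2=\nabla H^1_0\oplus\mathbb{H}_2\oplus\overline{\mathrm{range}}$, where the last summand is the range of $P$, the field $(I-P)y_1$ splits uniquely as $\nabla p+h_2$ with $p\in H^1_0$ and $h_2\in\mathbb{H}_2$. Both components depend continuously on $y_1(t)$, which gives $p\in C([0,\infty),H^1_0)$ and $h_2\in C([0,\infty),\mathbb{H}_2)$, and this establishes \eqref{e:orthogonal}.

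The main obstacle is justifying this orthogonal decomposition, and identifying the range of $P$ with the potentials from Step 1, on a merely Lipschitz, simply connected (but possibly not boundary-connected) domain. I would handle it using the closedness of the ranges of $\nabla$ and $\nabla\times$ with the appropriate boundary conditions, again via Weber's compact embedding. The space $\mathbb{H}_2$ may be nontrivial when $\partial\Omega$ is disconnected, which is why it appears in the statement.
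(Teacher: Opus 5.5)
The paper gives no proof of this proposition; it is quoted from \cite[Prop.~2]{Ell19}, so there is no in-paper argument to compare against. Your proof is correct and self-contained, and it follows the standard Hodge-decomposition route. Simple connectivity removes the Neumann harmonic fields $H(\mathrm{curl}\,0,\Omega)\cap H_0(\mathrm{div}\,0,\Omega)$, so $\mu y_2(t)\in\nabla\times H_0(\mathrm{curl},\Omega)$. The Dirichlet harmonic fields $\mathbb{H}_2$ can remain nontrivial, and they are absorbed into $h_2$.

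What your route buys is a direct treatment of mild solutions. The integrated identity $\mu y_2(t)-\mu y_2(0)=-\nabla\times\int_0^t y_1\,ds$ is legitimate: $\int_0^t y(s)\,ds\in D(A)$ for every mild solution, the middle block row of $M^{-1}$ is $(0,\mu^{-1},0)$, and $F_2=0$. Combined with the bounded projection $P$, it gives $\partial_t a=-Py_1\in C([0,\infty),L^2)$ explicitly. You never need to pass through strict solutions and a density argument. Moreover, your constant in $\|a(t)\|\lesssim\|\nabla\times a(t)\|$ is independent of $t$, which is stronger than the ``uniformly on compact time intervals'' in the statement.

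Two points should be tidied up.

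First, the range of $P$ is $\nabla\times H(\mathrm{curl},\Omega)=H(\mathrm{div}\,0,\Omega)\cap\mathbb{H}_2^\perp$. This is a closed subspace of $L^2$ on which no tangential trace is defined, so drop the condition $\nu\times\cdot=0$ from its description. It is the $L^2$-closure of $X:=H_0(\mathrm{curl},\Omega)\cap H(\mathrm{div}\,0,\Omega)\cap\mathbb{H}_2^\perp$, not $X$ itself.

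Second, write out the identity ``the potential of $\nabla\times w$ is $Pw$ for $w\in H_0(\mathrm{curl},\Omega)$'' explicitly. It follows from the orthogonal splitting $H_0(\mathrm{curl},\Omega)=\nabla H^1_0(\Omega)\oplus\mathbb{H}_2\oplus X$, obtained by intersecting $L^2=\nabla H^1_0(\Omega)\oplus\mathbb{H}_2\oplus\nabla\times H(\mathrm{curl},\Omega)$ with $H_0(\mathrm{curl},\Omega)$. This splitting is what shows both that $Pw\in X$ and that $\nabla\times$ is injective on $X$, since its kernel in $H_0(\mathrm{curl},\Omega)$ is $\nabla H^1_0(\Omega)\oplus\mathbb{H}_2\perp X$.
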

Now we turn to the proof of Theorem~\ref{t:exp-stability}.

\medskip
\noindent
{\em Proof of Theorem~\ref{t:exp-stability}.} 
We will establish the observability inequality
\begin{equation} \label{e:obs-ineq}
 \int_{\delta}^{T-\delta} \int_\Omega \langle \mu y_2,y_2\rangle \,dx\,dt
 \lesssim 
 \int_0^T \int_\Omega \langle \sigma y_1, y_1\rangle \,dx\,dt +
 \int_0^T \int_\Omega |y_3|^2\,dx\,dt\,.
\end{equation}
Since any mild solution is also a weak solution, we know that
\[
\int_0^T \Big[\int_\Omega \big(\langle\psi_t,\ve y_1+ay_3\rangle + \langle \nabla\times \psi,y_2 \rangle  - \langle \psi,\sigma y_1 \rangle\big) \,dx \Big]\,dt= 0
\]
for all $\psi \in C([0,T],H^1_{{\rm t}0}(\Omega))\cap C^1([0,T],L^2(\Omega,\mathbb{C}^3))$ satisfying $\psi(0)=\psi(T)=0$. Choosing $\varphi \in C_0^\infty(0,T)$ with $\varphi \equiv 1$ on $(\delta,T-\delta)$ and $\psi =\varphi^2 a$, where $a$ is the function introduced in the
Proposition~\ref{p:helmholtz}, gives for any $\alpha>0$
\[ 
\begin{split}
  \int_\delta^{T-\delta} \int_\Omega \langle \mu y_2,y_2 \rangle \,dx\,dt
  \le & \int_0^T \int_\Omega \varphi^2(t)\langle \nabla\times a,y_2 \rangle \,dx\,dt
  \\
  =&-\int_0^T \int_\Omega \varphi^2(t) \langle a_t, \ve y_1+ay_3\rangle \,dx\,dt
  \\
 & -\int_0^T \int_\Omega 2\varphi'(t)\varphi(t) \langle a, \ve y_1+ay_3 \rangle \,dx\,dt
   \\
 &+ \int_0^T \int_\Omega \varphi^2(t) \langle a,\sigma y_1\rangle \,dxdt \\
  \le& \,C_\alpha \int_0^T  \|y_1(t)\|_{L^2(\Omega)}^2+\|y_3(t)\|_{L^2(\Omega)}^2\,dt 
  +\int_0^T \|a_t(t)\|_{L^2(\Omega)}^2\,dt 
  \\& + \alpha \int_0^T\int_\Omega \varphi^2(t)|a|^2 \,dx\,dt
\end{split}
\]
where the Cauchy-Schwarz inequality was used in the last step. 
The last two integrals can estimated using Proposition~\ref{p:helmholtz}. For $\alpha$ sufficiently small we arrive at
\[
\begin{split}
\int_0^{T} \int_\Omega \varphi^2(t) \langle \mu y_2,y_2 \rangle \,dx\,dt
&=\int_0^T \int_\Omega \varphi^2(t)\langle \nabla\times a,\mu^{-1} (\nabla\times a) \rangle \,dx\,dt
\\[1mm]
&\lesssim  \int_0^T \int_\Omega \big(\langle y_1,y_1 \rangle+|y_3|^2\big)\,dx\,dt\,.
\end{split}
\]
Thus, the integral on the right-hand side can be estimated by some constant times
\[
\int_0^T \int_\Omega \big(\langle \sigma y_1,y_1 \rangle+|y_3|^2\big)\,dx\,dt
\]
since $\sigma$ is uniformly positive definite.
The observability inequality \eqref{e:obs-ineq} follows, as a consequence.
  
Finally, \eqref{e:obs-ineq} can be combined with the energy identity 
\eqref{e:energy-id} to obtain exponential stability. 
\qed

\medskip

% Strong stability

We now move on to the interesting case when $\sigma$ is only semipositive definite on 
$\Omega$. 
A key point in the proof of Theorem~\ref{t:convergence} is the following result. 

\begin{proposition} \label{p:pre-strong}
Suppose that $\ve,\mu \in W^{1,\infty}(\Omega,\R^{3\times 3})$ and that $\mathring{\Omega_>}$ is non-empty. 
If $y(\cdot)$ is a strict solution to system \eqref{e:system_1} with $y^0\in Y$, then 
\begin{equation*}
\|y_t(t)\|_H \longrightarrow 0\,, \quad \textrm{as $t \to +\infty$.}
\end{equation*}

\end{proposition}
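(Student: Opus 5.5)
Set $w:=y_t$. Since $y$ is a strict solution with $y^0\in Y$, $w$ is a mild solution of the homogeneous system \eqref{e:system_1} with initial datum $w(0)=M^{-1}Ay^0\in Y$. By the contraction property, $t\mapsto\|w(t)\|_H$ is nonincreasing, so $\|w(t)\|_H\to L\ge 0$. The goal is to show $L=0$. I would argue by contradiction through the $\omega$-limit set of $w$, exploiting the extra regularity $y\in C([0,\infty),D(A))$.

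\emph{Step 1: dissipation.} The energy identity \eqref{e:identity2}, applied to $w$ (first for strict data, then extended by density), gives
\[
\int_0^\infty\!\!\int_\Omega \langle\sigma w_1,w_1\rangle+\langle\kappa\nabla w_3,\nabla w_3\rangle\,dx\,dt<\infty .
\]
Hence, on time windows $[t_n,t_n+T]$ with $t_n\to\infty$, we have $\sigma^{1/2}w_1\to0$ and $\nabla w_3\to0$ in $L^2$. Since $w_3\in H^1_0$, Poincaré's inequality also gives $w_3\to 0$.

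\emph{Step 2: compactness.} I want the shifted trajectories $w(t_n+\cdot)$ to be relatively compact in $L^2((0,T),H)$. This is where $\ve,\mu\in W^{1,\infty}$ and the restriction to $Y$ enter.
\begin{itemize}
\item Because $w=M^{-1}Ay$ with $y(t)$ bounded in $D(A)$, we have $w_2=-\mu^{-1}\nabla\times y_1$ and $\mathrm{div}(\mu w_2)=0$ with $\langle\nu,\mu w_2\rangle=0$.
\item Moreover $\nabla\times w_2=\mu^{-1}$-weighted derivatives of $w$, which are bounded since $w$ itself solves the system with $\nabla\times$ controlled by $\|y_{tt}\|$-type quantities. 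If this control is unavailable, I would instead use the bound on $\|Ay(t)\|\lesssim\|y^0\|_{D(A)}$ together with a Weber/Picard compactness lemma.
\item For $w_1$, the constraint $\mathrm{div}(\ve w_1+a w_3)=0$ holds only on $\Omega_0$. On the complement, $\sigma^{1/2}w_1\to0$ anyway.
\end{itemize}
The realistic route is therefore weaker: extract a subsequence along which $w(t_n+\cdot)\rightharpoonup v$ weakly. The limit $v$ is a mild solution lying in $Y$, with $\sigma v_1=0$ and $v_3=0$ on $(0,T)\times\Omega$.

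\emph{Step 3: unique continuation.} For the limit $v$, the third equation gives $\theta_0\langle v_{1,t},a\rangle=0$. The first two equations reduce to an undamped Maxwell system with $v_1=0$ on the open set $\mathring{\Omega_>}$. Then $\nabla\times v_2=0$ there and $v_{2,t}=0$ there. Differentiating in time and using $W^{1,\infty}$ coefficients, a unique continuation result for Maxwell's equations (Eller / Carleman type) propagates $v\equiv$ time-independent. Hence $v$ is a stationary state, i.e.\ $v\in\ker(M^{-1}A)\cap Y$.

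But $v$ is the limit of $y_t$, which lies in $\overline{\mathrm{Ran}}(A)$. By skew/dissipative structure, $\ker A\cap Y$ is orthogonal (in $\langle\cdot,\cdot\rangle_H$) to the range of $M^{-1}A$ restricted to $Y$; this uses the adjoint \eqref{e:adjoint-of-A} and the fact that kernel elements satisfy $\sigma v_1=0$. Hence $v=0$.

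\emph{Step 4: upgrading to strong convergence.} The main obstacle is that weak convergence to $0$ does not contradict $L>0$. I would handle this by using the compactness of the embedding of $D(B)$ into $Y$, if available (Weber compactness on the divergence-constrained parts, plus the gradient part on $\Omega_{\ge}\cup\Omega_>$ controlled by the damping). The orbit $\{w(t)\}$ would then be precompact, and by LaSalle's invariance principle its $\omega$-limit consists of trajectories of constant norm $L$. Steps 1 and 3 force these trajectories to vanish, giving $L=0$.

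I expect the precompactness, i.e.\ controlling the gradient component of $w_1$ outside $\Omega_0$, to be the hardest point. There I would use $w_1=y_{1,t}$ and the identity $\mathrm{div}(\ve w_1+aw_3)=-\mathrm{div}(\sigma y_1)$, which follows from the first equation. Its right-hand side is bounded in $H^{-1}$ and, along the window, tends to zero in the time-averaged sense from Step 1 applied to $y$ itself.
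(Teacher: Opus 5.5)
There is a genuine gap at the compactness step, and you identify it yourself: weak limits of $w(t_n+\cdot)$ cannot contradict $\|w(t)\|_H\to L>0$. Your proposed repair, a compact embedding of $D(B)$ into $Y$, is false. Take $\phi_n\in C_0^\infty(\mathring{\Omega_>})$ with $\|\nabla\phi_n\|_{L^2}=1$ and $\nabla\phi_n\rightharpoonup 0$. The states $(\nabla\phi_n,0,0)$ lie in $D(A)\cap Y$, because the divergence constraint in $Y$ acts only on $\Omega_0$. Moreover $B(\nabla\phi_n,0,0)=M^{-1}(-\sigma\nabla\phi_n,0,0)$ is bounded. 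So this sequence is bounded in the graph norm but has no convergent subsequence in $Y$. Precompactness of the orbit must therefore come from the dynamics, not from a static embedding. Your fallback in Step~2 (a bound on $\|Ay(t)\|$ plus Weber/Picard) does not work either. It only bounds $w=M^{-1}Ay$ in $H$, whereas Weber/Picard compactness needs uniform bounds on $\nabla\times w_1$ and $\nabla\times w_2$.

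The missing mechanism is essentially the identity in your last paragraph, but it must be used pointwise in time, not on average, and combined with a Helmholtz splitting. This is what the paper does:
\begin{itemize}
\item Reduce by density to $y^0\in D(B^2)$. Then $w=y_t$ is itself a strict solution with $\|w_t(t)\|_H\le\|w_t(0)\|_H$, so the equations bound $\nabla\times w_1$, $\nabla\times w_2$ and $\nabla\cdot(\kappa\nabla w_3)$ uniformly in $t$.
\item Write $w_1(t)=f(t)+\nabla g(t)$, where $g(t)\in H^1_0(\Omega)$ solves $\mathrm{div}(\ve\nabla g)=\mathrm{div}(\ve w_1)$.
\item Then $f(t)$ is bounded in $H_0(\mathrm{curl},\Omega)\cap H(\mathrm{div}_\ve 0,\Omega)$, $w_2(t)$ in $H(\mathrm{curl},\Omega)\cap H_0(\mathrm{div}_\mu 0,\Omega)$, and $w_3(t)$ in $H^1_0(\Omega)$; all three embed compactly in $L^2$.
\item The gradient part satisfies $\|g(t)\|_{H^1}\lesssim\|\sqrt{\theta_0}\,a\,w_3(t)+\sigma y_1(t)\|_{L^2}\to 0$. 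This holds because $t\mapsto\|\sigma^{1/2}y_1(t)\|^2$ and $t\mapsto\|w_3(t)\|^2$ are integrable on $(0,\infty)$ and Lipschitz, since $y_t$ and $w_t$ are bounded.
\end{itemize}
This yields $w(t_n)\to w$ strongly in $H$. It also gives $\mathrm{div}(\ve w_1)=0$ on all of $\Omega$, not just on $\Omega_0$, which is what allows the limit to be treated as a divergence-free Maxwell field after LaSalle.

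Finally, your Step~3 appeals to an unspecified time-dependent unique continuation theorem for Maxwell with Lipschitz coefficients, which is not clearly available. The paper avoids this. It expands the limit trajectory in eigenfunctions of the skew-adjoint Maxwell operator on $\mathcal{H}$, which has compact inverse, and isolates each eigenfunction via $\frac1t\int_0^t u(s)e^{-\lambda_k s}\,ds$. It then applies unique continuation for the time-harmonic system, where $W^{1,\infty}$ coefficients suffice.
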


\begin{proof}
By density (see, e.g. Chapter II, Proposition 1.8 \cite{EngNag}) it will suffice to work with solutions corresponding to $y^0 \in D(B^2)\cap Y$, where 
\[
 D(B^2) = \{y\in D(B)\colon By\in D(B)\}\,.
\] 
By the energy identity \eqref{e:identity2}, the function $\|y_t(t)\|_H$ is decreasing. We set $y_t=:w$ to simplify the notation, and note that $w \in C([0,\infty),D(B))\cap C^1([0,\infty),Y)$. 
Arguing by contradiction, suppose that $\|w(t)\|_H\longrightarrow \beta>0$, as $t\to +\infty$. Since $w$ is a strict solution to the BVP \eqref{e:PDE-system}-\eqref{e:BC}, by the energy identity \eqref{e:identity2} we see that $\|w_t(t)\|_H$ remains bounded for all $t>0$. 
Using now the differential equations \eqref{e:PDE-system} we infer that the $L^2(\Omega)$-norms of $\nabla\times w_1$, $\nabla\times w_2$ and $\nabla\cdot (\kappa \nabla w_3)$ are bounded as well, for all $t>0$.  

So far we have shown that $w_2(t)$ is in a bounded set of $H(\mathrm{curl},\Omega)\cap H_0(\mathrm{div}_\mu,\Omega)$ and $w_3(t)$ is in a bounded set of $H^1_0(\Omega)$ for all $t>0$. Both of these spaces are compactly imbedded in $L^2(\Omega,\C^3)$ and $L^2(\Omega,\C)$, respectively. The former is a consequence of \cite[Theorem~8.1.1]{AssCiaLab} where the latter is the Rellich Selection Theorem. 

Regarding $w_1$, we know that $w_1(t)$ remains bounded in $H_0(\mathrm{curl},\Omega)$, so we have to discuss its divergence. Applying the divergence to the first equation in 
\eqref{e:PDE-system} gives
\[
\mathrm{div}(\ve w_1(t)) = -\mathrm{div}\big[a \theta_0^{1/2} w_3(t)+\sigma y_1(t)\big]\,.
\]  
Consider now the elliptic equation 
\[
 \mathrm{div}(\ve \nabla g) = \mathrm{div}(\ve w_1)
\]
complemented with zero Dirichlet conditions in $\Omega_0$. 
This problem has a unique solution $g=g(t) \in H^1_0(\Omega)$ and setting $f=z_1 -\nabla g$ we obtain a Helmholtz decomposition 
\[
w_1(t) = f(t) + \nabla g(t)\,,
\]
with $f(t) \in H(\mbox{div}_\ve\,0,\Omega)$. 
From $w_1(t) \in H_0(\mathrm{curl},\Omega)$ we infer that $\nu\times f(t)=0$ on $\partial\Omega$ for all $t>0$. 

Relying on our discussions after the energy identities \eqref{e:energy-id} and \eqref{e:identity2}, we have 
\begin{equation*}
\begin{split}
 \|\mathrm{div}(\ve w_1(t))\|_{H^{-1}(\Omega)} &=
 \|\mathrm{div}[a \theta_0^{1/2} w_3(t)  +\sigma y_1(t)]\|_{H^{-1}(\Omega)} 
 \\
 &\le \|a \theta_0^{1/2} w_3(t)+\sigma y_1(t))\|_{L^2(\Omega)}
 \longrightarrow 0\,,
\end{split}
\end{equation*}
as $t\to +\infty$, where we used also that $\sigma\lesssim \sigma^{1/2}$ is the sense of positive semidefinite matrices.
Thus
\[
 \|g(t)\|_{H^1(\Omega)} \longrightarrow 0, \quad \mbox{ as } t\to +\infty\,.
\] 
In summary, there exists a sequence $t_n \to +\infty$ such that $w(t_n) \to w\in Y$. 
In addition, $\mathrm{div}(\ve w_1)=0$ in $\Omega$.

Finally, we solve the IBVP \eqref{e:PDE-system}-\eqref{e:IC}-\eqref{e:BC} with initial data $w$. The solution of this IBVP, denoted by $W(t)$, satisfies the relation
\[
 W(t) = S(t)w = S(t)\lim_{n\to +\infty} S(t_n)w(0) = 
 \lim_{n\to \infty} S(t+t_n)w(0).
\]  
Hence, by the energy identity $\|W(t)\|_H=\beta$ for all $t$ and thus $W_3(t)=\sigma W_1(t)=0$ for all $t$. We infer that $(W_1(t),W_2(t))$ is a mild solution to Maxwell's equations with $\sigma=0$ which also satisfies the divergence equations $\mathrm{div}(\ve W_1(t))=\mathrm{div}(\mu W_2(t))=0$ in $\Omega$.\footnote{This argument is known as the LaSalle Invariance Principle, see e.g. \cite[Prop. 1.3.6]{Alabau2010}.} 

For $W_1(t),W_2(t)$ we use a unique continuation argument very similar to the one put forward in 
\cite[p.~355-356]{Ell19} to show that $w\equiv 0$.

For that we use the Maxwell operator $M(x,D)u = (\ve^{-1}\nabla\times u_2,-\mu^{-1}\nabla\times u_1)$ on the state space \[
\cH:=H(\mathrm{div}_\ve 0,\Omega)\times H_0(\mathrm{div}_\mu 0,\Omega),
\]
equipped with $L^2(\Omega,\C^6)$ inner product weighted by the positive definite real-symmetric matrix function $\mathrm{diag}(\ve,\mu)$,
with domain 
\[
 D(M)= [H_0(\mathrm{curl},\Omega)\times H(\mathrm{curl},\Omega)]\cap \cH.
\]
This operator is skew-adjoint. The arguments put forward in the proof of Proposition \ref{p:Lumer-P} show that $M^{-1}$ maps $\cH$ continuously into $D(M)$, and since $D(M)$, considered as a Hilbert space with the graph norm, is compactly imbedded in $\cH$ \cite[Theorems~8.1.1, 8.1.3]{AssCiaLab}, the operator $M^{-1}$ is compact. Hence, $M$ has only purely imaginary (non-zero) eigenvalues and the corresponding eigenfunctions are mutually orthogonal in $\cH$. Using these eigenvalues and eigenfunctions, the solution $u(t)=(W_1(t),W_2(t))$ has the expansion
\[
 u(t,x) = \sum_{k=1}^\infty e^{\lambda_k t}   u_k(x)\quad 
 \mbox{ with } \langle u_k,u_l \rangle_{\cH}=0 \mbox{ and }
 \lambda_k\neq \lambda_l.
\]  
We introduce the sequence
\[
  v_k(t,x) = \frac{1}{t} \int_0^t u(s,x) e^{-\lambda_k s}\,ds.
\] 
and observe that the first three component functions of $w_k$ must vanish on $\Omega_>$. Using the series expansion above gives
\[
 v_k(t,x) = u_k(x) + \frac{1}{t}\sum_{l\neq k} 
 \frac{e^{(\lambda_l-\lambda_k)t}-1}{\lambda_l-\lambda_k}u_l(x).
\] 
Letting $t\to +\infty$ shows that the first three component functions of $u_k$ must vanish in $\Omega_>$. Each eigenfunction satisfies $\lambda_k u_k = Mu_k$ and using the structure of the Maxwell operator, we see that the remaining three components of $u_k$ must vanish on $\Omega_>$ for $k=1,2,\dots$ as well. Thus $u_k\equiv 0$ on $\Omega_>$.  
     
Each eigenfunction $u_k$ is a solution to the time-harmonic Maxwell system. Since $u_k=0$ on $\mathring{\Omega_>}$, unique continuation of the time-harmonic Maxwell equations (with zero divergence conditions) implies that $u_k\equiv 0$ in $\Omega$ \cite{Vog91,EY06}, for $k=1,2,...$. Here the Lipschitz continuity of the coefficients is critical. We infer $u(t,x)\equiv 0$. The conclusion $\|y_t(t)\|_H \longrightarrow 0$, as $t\to +\infty$, follows.
\end{proof}

% COROLLARY

\begin{corollary}\label{c: strong}
Under the hypothesis of Proposition \ref{p:pre-strong}, given 
$y^0 \in R(B)=\{Bz\colon z\in D(B)\}$, we have $\|S(t)y^0\|_H \longrightarrow 0$, 
as $t\to +\infty$.
\end{corollary}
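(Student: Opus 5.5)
Write $y^0 = Bz^0$ for some $z^0 \in D(B) = D(A)\cap Y$. Since the semigroup commutes with its generator on the domain, we have
\[
S(t)y^0 = S(t)Bz^0 = B\,S(t)z^0 = \frac{d}{dt}S(t)z^0 .
\]
Thus $S(t)y^0$ is exactly the time derivative $y_t(t)$ of the strict solution $y(t)=S(t)z^0$ of system \eqref{e:system_1}, which has initial datum $z^0\in Y$. The plan is to invoke Proposition~\ref{p:pre-strong} for this strict solution. That proposition gives $\|y_t(t)\|_H\to 0$ as $t\to+\infty$, which is precisely $\|S(t)y^0\|_H\to 0$.

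The key steps, in order, are as follows.

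\emph{Step 1: the datum lies in $Y$.} We check that $z^0\in Y$. This holds by definition, since $D(B)=D(A)\cap Y$.

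\emph{Step 2: $y$ is strict.} Since $z^0\in D(B)$, standard semigroup theory gives $y\in C([0,\infty),D(B))\cap C^1([0,\infty),Y)$. This is a strict solution in the sense of Theorem~\ref{t:well-posed} with $G\equiv 0$, restricted to the invariant space $Y$.

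\emph{Step 3: identify the derivative.} We use $y_t(t)=BS(t)z^0=S(t)Bz^0=S(t)y^0$, which follows from $S(t)D(B)\subset D(B)$ and the commutation of $S(t)$ with $B$ on $D(B)$.

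\emph{Step 4: conclude.} Apply Proposition~\ref{p:pre-strong}.

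The only point requiring care is that Proposition~\ref{p:pre-strong} is stated for strict solutions with $y^0\in Y$, with no extra regularity such as $y^0\in D(B^2)$. In its proof, the reduction to $D(B^2)$ is carried out by density together with the monotonicity of $\|y_t\|_H$. Explicitly: for $z^0\in D(B)$, pick $z^0_n\in D(B^2)$ with $Bz^0_n\to Bz^0$ in $Y$. This is possible because $D(B^2)$ is a core for $B$. Contractivity then gives
\[
\|S(t)Bz^0\|_H \le \|S(t)Bz^0_n\|_H + \|Bz^0_n-Bz^0\|_H,
\]
and the first term tends to zero for each fixed $n$. Hence the hypotheses match exactly, and I expect no genuine obstacle beyond making this density step explicit.
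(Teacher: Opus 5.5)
Your proposal is correct and follows the paper's proof exactly. You write $y^0=Bz^0$, use $S(t)Bz^0=BS(t)z^0=\frac{d}{dt}S(t)z^0$, and invoke Proposition~\ref{p:pre-strong} for the strict solution $S(t)z^0$; the density reduction to $D(B^2)$ that you spell out is already built into the proof of that proposition, so it is harmless but not needed here.
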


\begin{proof}
By assumption $y^0 = Bz^0$ for some $z^0\in D(B)$. Then, 
\[
 S(t)y^0 = S(t)Bz^0 = BS(t)z^0 =\frac{d}{dt}\big[ S(t) z^0\big]
\]
and $\|S(t)y^0\|_H \longrightarrow 0$, as $t\to +\infty$ by Proposition~\ref{p:pre-strong}.
\end{proof}

With this Corollary in hand, we can now prove Theorem~\ref{t:convergence}.

\medskip
\noindent
{\em Proof of Theorem~\ref{t:convergence}.} 
One can show that the adjoint operator $B^\ast$ has domain $D(B)$ and that $B^\ast=M^{-1}A^\top$, see \eqref{e:adjoint-of-A}. 
This leads to $\ker B=\ker B^*$ and thus
 \[
  R(B)^\perp = R(B^\ast)^\perp = \ker B\;.
 \]   
Given $y^0 \in Y$ we can write $y^0 = \xi+ z^0$ where $\xi \in \ker B$ and $z^0 \in \overline{R(B)}$. Since $S(t)$ restricted to $\ker B$ is just the identity, we have  
\[
 S(t) y^0 = \xi + S(t)z^0 .
\]
Given $\ve>0$, there exists a $z\in R(B)$ such that $\|z-z^0\|_H<\ve/2$. Hence, for large $t$, using the Corollary above, results in
\[
 \|S(t) y^0-\xi\|_H\le \|S(t)(z^0-z)\|_H+\|S(t)z\|_H<\ve\,.
\] 
\qed

% EXPANDING on the CONVERGENCE to EQUILIBRIA RESULT

\subsection{Characterization of $\boldsymbol{\ker B=\ker A \cap Y}$}  \label{ss:expand}
%Next we will 
In this section we expand a bit on the earlier result on convergence to equilibria,
by showing that under some additional assumptions the kernel of $A$ is finite-dimensional. 
The next couple of results are inspired by \cite[Lemmas 3.2 and 3.3]{NS25}.
We start with the following one. 

\begin{proposition} 
Suppose that $\mathring{\Omega_>}$ is non-empty. Then the operator 
\begin{equation*}
\mathrm{i} \lambda  - B\colon D(B) \longrightarrow Y
\end{equation*}
is injective for each $\lambda \in \R\setminus \{0\}$. 
\end{proposition}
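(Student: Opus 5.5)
Suppose $y=(y_1,y_2,y_3)\in D(B)$ satisfies $By=\mathrm{i}\lambda y$ with $\lambda\in\R\setminus\{0\}$. The plan is to first extract the dissipated quantities from the real part of the inner product, and then apply unique continuation for time-harmonic Maxwell equations, as at the end of the proof of Proposition~\ref{p:pre-strong}.

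\textbf{Step 1: dissipation.} Since $B=M^{-1}A$ on $D(B)$, we have
\[
\mathrm{i}\lambda\|y\|_H^2=\langle M^{-1}Ay,y\rangle_H=\int_\Omega\langle Ay,y\rangle\,dx .
\]
Taking real parts, exactly as in the dissipativity computation of Proposition~\ref{p:Lumer-P}, gives
\[
0=-\int_\Omega\langle\sigma y_1,y_1\rangle\,dx-\int_\Omega\langle\kappa\nabla y_3,\nabla y_3\rangle\,dx .
\]
Therefore $\sigma^{1/2}y_1=0$, hence $\sigma y_1=0$. Also $\nabla y_3=0$, and since $y_3\in H^1_0(\Omega)$ this forces $y_3=0$. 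In particular $y_1=0$ a.e.\ on $\Omega_>$, because $\sigma$ is positive definite there.

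\textbf{Step 2: reduced Maxwell system.} Write the equation as $Ay=\mathrm{i}\lambda My$ and substitute $y_3=0$ and $\sigma y_1=0$. The three block rows become
\[
\nabla\times y_2=\mathrm{i}\lambda\ve y_1,\qquad -\nabla\times y_1=\mathrm{i}\lambda\mu y_2,\qquad \mathrm{i}\lambda\sqrt{\theta_0}\langle a,y_1\rangle=0 .
\]
So $(y_1,y_2)$ solves the time-harmonic Maxwell system with $\sigma=0$ and nonzero frequency $\lambda$. Because $\lambda\ne0$, taking divergences gives $\mathrm{div}(\ve y_1)=0$ and $\mathrm{div}(\mu y_2)=0$ in all of $\Omega$, as distributions. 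This holds automatically here, independently of the constraints built into $Y$. On $\mathring{\Omega_>}$ we have $y_1=0$, so the second equation gives $y_2=\mathrm{i}\lambda^{-1}\mu^{-1}\nabla\times y_1=0$ there as well.

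\textbf{Step 3: unique continuation.} We now have a solution $(y_1,y_2)$ of the time-harmonic Maxwell equations, with zero divergence conditions, that vanishes on the nonempty open set $\mathring{\Omega_>}$. Unique continuation \cite{Vog91,EY06} gives $(y_1,y_2)\equiv0$ on the connected set $\Omega$. Combined with $y_3=0$, this yields $y=0$, so $\mathrm{i}\lambda-B$ is injective.

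\textbf{Main obstacle.} The delicate point is Step 3. The cited unique continuation results require Lipschitz coefficients, so I would invoke the standing assumption $\ve,\mu\in W^{1,\infty}(\Omega,\R^{3\times3})$ from Theorem~\ref{t:convergence}; the statement implicitly relies on it. One must also check that $\mathring{\Omega_>}$ is open and nonempty, and that $\Omega$ is connected, so that vanishing on an open subset propagates to all of $\Omega$. The divergence-free property from Step 2 is exactly what allows the second-order elliptic reduction used in those unique continuation theorems.
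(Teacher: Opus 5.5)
Your proposal is correct and follows essentially the same route as the paper. The real part of $\langle(\mathrm{i}\lambda-B)y,y\rangle_H$ forces $y_3=0$ and $\sigma y_1=0$; the curl equations with $\lambda\neq0$ then give the divergence-free conditions and $y_1=y_2=0$ on $\mathring{\Omega_>}$; unique continuation for the stationary Maxwell system finishes the argument. You are also right that this last step tacitly requires the Lipschitz regularity $\ve,\mu\in W^{1,\infty}(\Omega,\R^{3\times 3})$, which the statement does not list but the paper's proof relies on.
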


\begin{proof}
Let $\lambda \in \R\setminus \{0\}$ be given, and take $y \in \ker (\mathrm{i} \lambda - B)$. Then 
\begin{equation*}
\begin{split}
 0 &= \Re \langle (\mathrm{i}\lambda -B)y,y\rangle_H 
 =  \Re\langle (\mathrm{i} \lambda M - A)y,y\rangle_{L^2(\Omega)}
 \\
 &= -\int_\Omega \langle \sigma y_1,y_1\rangle\,dx 
  - \int_\Omega \langle \kappa \nabla y_3,\nabla y_3\rangle\,dx\,.
\end{split}
\end{equation*}
Hence, $y_3= 0$, $\sigma y_1= 0$ on $\Omega$, and $y_1= 0$ on $\Omega_>$. Furthermore, since $\mathrm{i}\lambda \ve y_1 -\nabla\times y_2= 0$ in $\Omega$ we see that $\nabla\cdot (\ve y_1)=0$ in $\Omega$. Likewise, the equation $\mathrm{i}\lambda\mu y_2 + \nabla\times y_1=0$ gives $\mathrm{div}(\mu y_2)=0$ and also $y_2=0$ on $\Omega_>$. The proof is finished using unique continuation for the stationary Maxwell equations (see \cite{Vog91,EY06}). 
\end{proof}  

In the case $\lambda=0$ we need additional hypotheses, thereby achieving 
a precise description of $\ker B$.
 
\begin{proposition} \label{p:last}
Suppose that $\Omega$ is simply connected, $\mathring{\Omega_{\ge}}=\emptyset$ and that $\mathring{\Omega_>}$ has a Lipschitz boundary. 
Then 
\begin{equation} \label{e:kernelB}
\ker B = \big\{(y_1,0,0)\colon y_1|_{\Omega_>}=0 \mbox{ and } y_1|_{\Omega_0} \in H_0(\mathrm{curl}\,0,\Omega_0)\cap H(\mathrm{div}_\ve 0,\Omega_0)\big\}\,.
\end{equation}
The space $\ker B$ is finite dimensional with dimension equal to the number of components for $\partial \Omega_0$.  
 
\end{proposition}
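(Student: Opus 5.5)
We take $y\in\ker B$, i.e. $y\in D(A)\cap Y$ with $Ay=0$. The argument has two parts: first pin down the structure of $y$, then count the dimension.

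\emph{Step 1: structure of the kernel.} Taking the real part of $\langle Ay,y\rangle_{L^2}=0$ gives, exactly as in the proof of Proposition~\ref{p:Lumer-P},
\[
\int_\Omega \langle\sigma y_1,y_1\rangle\,dx+\int_\Omega\langle\kappa\nabla y_3,\nabla y_3\rangle\,dx=0 .
\]
Hence $y_3=0$, since $y_3\in H^1_0$ and $\kappa$ is uniformly positive. Also $\sigma y_1=0$, so $y_1=0$ a.e. on $\Omega_>$.

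The remaining equations read $\nabla\times y_1=0$ and $\nabla\times y_2=\sigma y_1=0$. Thus $y_2\in H(\mathrm{curl}\,0,\Omega)\cap H_0(\mathrm{div}_\mu 0,\Omega)$. Since $\Omega$ is simply connected, $y_2=\nabla\phi$ with $\mathrm{div}(\mu\nabla\phi)=0$ and $\langle\nu,\mu\nabla\phi\rangle=0$ on $\partial\Omega$. This Neumann problem forces $\phi$ to be constant, so $y_2=0$.

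For $y_1$ we have $y_1\in H_0(\mathrm{curl},\Omega)$ with $\nabla\times y_1=0$ and $y_1=0$ on $\Omega_>$. Because $\mathring{\Omega_\ge}=\emptyset$, and treating $\Omega_\ge$ as negligible (this is the intended meaning of the hypothesis, possibly requiring measure zero), $y_1$ is supported in $\overline{\Omega_0}$. The condition $y\in Y$ gives $\mathrm{div}(\ve y_1)=0$ in $\Omega_0$.

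It remains to transfer the tangential trace condition. Since $y_1\in H_0(\mathrm{curl},\Omega)$ vanishes outside $\Omega_0$, its zero extension argument yields $\nu\times y_1=0$ on $\partial\Omega_0$. To make this precise, test $\nabla\times y_1=0$ against fields in $H(\mathrm{curl},\Omega_0)$ and use the Lipschitz regularity of $\partial\Omega_>$, hence of $\partial\Omega_0$. Then $y_1|_{\Omega_0}\in H_0(\mathrm{curl}\,0,\Omega_0)\cap H(\mathrm{div}_\ve 0,\Omega_0)$.

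Conversely, any such field extended by zero is curl-free in $\Omega$, lies in $H_0(\mathrm{curl},\Omega)$ because the tangential trace vanishes on the interface, and satisfies $\sigma y_1=0$. It therefore belongs to $\ker A\cap Y$. This proves \eqref{e:kernelB}.

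\emph{Step 2: dimension.} The space $H_0(\mathrm{curl}\,0,\Omega_0)\cap H(\mathrm{div}_\ve 0,\Omega_0)$ is the space of Dirichlet (normal) harmonic fields on the Lipschitz domain $\Omega_0$. We invoke the standard result (e.g. \cite{AssCiaLab}, or Picard's work) that its dimension equals the number of connected components of $\partial\Omega_0$ minus one for each component of $\Omega_0$. Concretely, such fields are $\nabla q$ with $\mathrm{div}(\ve\nabla q)=0$ and $q$ constant on each boundary component $\Gamma_j$. The constants $q|_{\Gamma_j}$ parametrize the space modulo one overall constant per component of $\Omega_0$.

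Here the plan hits a genuine point of uncertainty. To match the statement's count, \emph{equal to the number of components of} $\partial\Omega_0$, I would try to use the fact that $q$ must be constant across the whole of $\Omega_>\cup\partial\Omega$ in a suitable sense, via zero tangential trace on the interface. I would then check carefully whether the count is exactly the number of components or that number minus one. The hardest part is this step together with the trace argument on the interface $\partial\Omega_0$. Both rely on the Lipschitz regularity of $\partial\Omega_0$, and on $\Omega_\ge$ being negligible, to apply the cohomological characterization of harmonic fields.
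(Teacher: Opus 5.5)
Your Step~1 follows the paper's proof essentially line by line. The dissipation identity gives $y_3=0$ and $\sigma y_1=0$, simple connectedness removes $y_2$, $y_1$ is curl-free, vanishes on $\Omega_>$ and has zero tangential trace on $\partial\Omega_0$, and the reverse inclusion is by zero extension. You are more careful than the paper in two places. You use $y_2\in H_0(\mathrm{div}_\mu 0,\Omega)$ explicitly to get $y_2=0$. You also rightly point out that $\mathring{\Omega_{\ge}}=\emptyset$ does not make $\Omega_{\ge}$ a null set, which the paper's argument tacitly needs.

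The gap is Step~2, and it comes from aiming at a false target. The count you quote is the correct one. For a bounded Lipschitz open set $\Omega_0$, the space $H_0(\mathrm{curl}\,0,\Omega_0)\cap H(\mathrm{div}_\ve 0,\Omega_0)$ has dimension equal to the number of components of $\partial\Omega_0$ minus the number of components of $\Omega_0$. That number is fixed by the space itself. No further property of the potential $q$, such as constancy on $\Omega_>\cup\partial\Omega$, can raise it to ``the number of components of $\partial\Omega_0$''.

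Concretely, take $\Omega=\{|x|<3\}$.
For $\Omega_>=\{2<|x|<3\}$, $\partial\Omega_0=\{|x|=2\}$ is connected and $\ker B=\{0\}$. The same happens for $\Omega_>=\emptyset$, a case the paper's proof explicitly allows.
For $\Omega_>=\{|x|<1\}$, $\partial\Omega_0$ has two components, yet $\ker B$ is spanned by the single element $(\nabla q,0,0)$. Here $q$ solves $\mathrm{div}(\ve\nabla q)=0$ in $\Omega_0$ with $q=1$ on $\{|x|=1\}$ and $q=0$ on $\{|x|=3\}$, and $\nabla q$ is extended by zero to $\Omega_>$.

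The paper's own proof only cites \cite{AssCiaLab} for the count, and misstates it by one per component of $\Omega_0$. Theorem~\ref{t:strong-stability}, which asserts strong stability when $\partial\Omega_0$ is connected, needs $\ker B=\{0\}$ in that case. It is therefore consistent only with the corrected count.

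So you should finish by proving $\dim\ker B=\#\{\text{components of }\partial\Omega_0\}-\#\{\text{components of }\Omega_0\}$, which is one less than the number of components of $\partial\Omega_0$ when $\Omega_0$ is connected. Flag the discrepancy with the statement rather than trying to establish it as written.
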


\begin{proof}
Let $y\in \ker B$. As in the proof above we have $y_3=0$ and $\sigma y_1=0$ in $\Omega$. Hence, $\nabla\times y_2=0$ and thus $y_2=0$ in $\Omega$ (since $\Omega$ is simply connected). 
Also $\nabla\times y_1=0$ in $\Omega$ and $y_1=0$ on $\Omega_>$. Then $\mathrm{div}(\ve y_1)=0$ in $\Omega_0$ and $\nu\times y_1=0$ in $\partial\Omega_0$. (Note that the argument works also in the case that $\Omega_>$ is empty.) Hence, 
$y_1|_{\Omega_0} \in H_0(\mathrm{curl}\,0,\Omega_0)\cap H(\mathrm{div}_\ve 0,\Omega_0)$. The dimensions of this space is equal to the number of components of its boundary \cite[Proposition 6.1]{AssCiaLab}. 
So far we have proved 
\[
\ker B \subset \big\{(y_1,0,0)\colon y_1|_{\Omega_>}=0 \mbox{ and } y_1|_{\Omega_0} \in H_0(\mathrm{curl}\,0,\Omega_0)\cap H(\mathrm{div}_\ve 0,\Omega_0)\big\}\,.
 \] 
However, all element in the set on the right-hand side are in the kernel of $B$, 
which establishes \eqref{e:kernelB}.  
\end{proof}

\smallskip
\noindent
{\em Proof of Theorem~\ref{t:strong-stability}.}
It suffices to combine Theorem~\ref{t:convergence} with Proposition~\ref{p:last}.
Indeed, it is apparent that under some additional hypotheses (than the ones of Theorem~\ref{t:convergence}) the equilibrium solutions of the dynamical system $(Y,S(t))$ span a finite-dimensional set, whose dimension depends on the topological properties of
$\Omega_0$. 
In particular in the case that $\partial \Omega_0$ consists only of one component, the semigroup $S(t)$ is strongly stable.   
\qed

\begin{remark}
Under the same assumptions as in the last proposition we have
\begin{equation*}
\ker B =\big\{(\nabla\phi,0,0)\in H\colon \phi \in H^1_0(\Omega), \; 
\mathrm{div}(\ve \nabla\phi)=0 \mbox{ in } \Omega_0, \; \phi = c \mbox{ in } \Omega_>\big\}\,.
\end{equation*}
In the case that $\Omega_>$ consists of more than one component, the constant value can be different for different components.
\end{remark}

{\em Proof of Theorem~\ref{t:strong-stability2}.}
It suffices to combine Theorem~\ref{t:convergence} the with remark above. Let $z\in \ker B$. Then 
\[
 \langle z,y^0\rangle_H = 
 \int_\Omega \langle \nabla \phi, \ve e^0 \rangle\,dx =0,
\]
which shows that $y^0 \in (\ker B)^\perp = \overline{R(B)}$. 
Now we apply Theorem \ref{t:convergence}, or more precisely its proof, to obtain 
$\|S(t)y^0\|_H \to 0$ as $t\to +\infty$.
\qed

% ACKNOWLEDGEMENTS

\section*{Acknowledgements}
This research was started while M.~Eller was the guest of F.~Bucci at the Dipartimento di Matematica e Informatica of the Universit\`a degli Studi di Firenze (Unifi). 
Eller's stay was partially supported within the ``Internazionalizzazione'' programme of Unifi, which both authors gratefully acknowledge. 
M. Eller also acknowledges support of the Department of Mathematics and Statistics of Georgetown University. 
\\
F.~Bucci is a member of the Gruppo Nazionale per l'Analisi Mate\-ma\-tica, la Probabilit\`a  e le loro Applicazioni (GNAMPA) of the Istituto Nazionale di Alta Matematica; %(INdAM)
she is responsible of the 2026 GNAMPA research project ``Partial differential equations methods for control and inverse problems''.

% APPENDIH

\appendix %This command ends the counting of sections.

\section{Additional remarks on the constitutive model}

In this appendix we collect several remarks concerning the constitutive
assumptions introduced in Section~\ref{ss:derivation}, together with
examples of materials exhibiting thermoelectric or pyroelectric
behavior compatible with the model under consideration.

\subsection{Anisotropy and thermal polarization}

The constitutive law
\[
D=\varepsilon e+a\vartheta
\]
contains a temperature-dependent polarization contribution, where
\[
\vartheta=\theta-\theta_0
\]
denotes the temperature variation around a reference equilibrium state.

The vector $a\in\mathbb R^3$ represents a thermal polarization
direction and models thermoelectric or pyroelectric effects in
anisotropic polar media. The permittivity tensor
$\varepsilon$ is therefore allowed to be anisotropic, namely, a
symmetric positive definite matrix-valued field.

In crystalline materials, the orientation of $a$ is typically related
to preferred polarization directions determined by the symmetry class
of the medium. The constitutive relation
\[
D=\varepsilon e+a\vartheta
\]
should therefore be viewed as a linearized phenomenological model for
temperature-induced polarization effects near a reference equilibrium
configuration.

The coefficient $c$ appearing in
\[
  s
  =
  \theta_0\langle \varepsilon^{-1}a,D_t\rangle
  +
  c\vartheta_t .
\]
is interpreted as an effective heat-capacity parameter. For passive
thermodynamically stable materials one assumes $c>0$.

\subsection{Relation with Joule-heating models}

A more classical thermal effect in electromagnetic media is described
by Joule heating. In that setting the conductivity depends on the
temperature and the heat equation contains a forcing term of the form
\[
\sigma |e|^2 .
\]
The resulting Maxwell--heat system is nonlinear and its well-posedness
theory is substantially more delicate; see
\cite{Yin98,Yin06}. A linearized model was considered in \cite{BF15}.

In contrast, the present model introduces a linear thermoelectric
coupling through the constitutive law
\[
D=\varepsilon e+a\vartheta,
\]
leading to the coupled system \eqref{e:PDE-system}.

\subsection{Examples of admissible materials}

The model is intended for anisotropic dielectric media exhibiting
thermally induced polarization effects. Examples include ferroelectric
or pyroelectric materials such as barium titanate (BaTiO$_3$),
lithium niobate (LiNbO$_3$), lead zirconate titanate (PZT), and
tourmaline, as well as engineered polymer composites with aligned
microstructures.

Barium titanate and lithium niobate are classical examples of
anisotropic materials displaying strong pyroelectric response, while
PZT ceramics exhibit coupled electrical, thermal, and mechanical
effects after poling. Similar anisotropic thermoelectric behavior may
also arise in engineered polymer composites based on aligned fibers or
nanostructures.

The present constitutive framework should be viewed as a linearized
phenomenological approximation near a reference equilibrium state and
is not intended to provide a quantitative microscopic description of
specific materials.

\subsection{Related literature}

Constitutive models involving anisotropic thermoelectric or
pyroelectric couplings have been considered in several mathematical
and physical settings.

Direction-dependent thermoelectric transport properties in anisotropic
semiconductor compounds are investigated in \cite{Guo2015}. Coupled
thermal-electromagnetic effects in anisotropic pyroelectric media are
studied in \cite{Abdalla2016,Alshaikh2014}, while effective
thermoelectric coefficients in anisotropic polycrystalline materials
are discussed in \cite{Basaula2022}.

Additional background on constitutive modeling in electromagnetic
continuous media may be found in \cite{FabMor}.

% REFERENCES

% THE END
\end{document}